\theoremstyle{plain}
\newtheorem{theorem}{Theorem}
\newtheorem{lemma}[theorem]{Lemma}
\newtheorem{proposition}[theorem]{Proposition}
\newtheorem{corollary}[theorem]{Corollary}
\newtheorem{condition}{Condition}
\newtheorem*{example*}{Example}
\newtheorem{remark}{Remark}
\newtheorem*{remark*}{Remark}
\newtheorem{acknowledgement*}{Acknowledgement}
\newcommand{\diag}{\operatorname{diag}}
\newcommand{\argmin}{\operatorname*{\arg\min}}
\newcommand{\argmax}{\operatorname*{\arg\max}}
\newcommand{\sgn}{\operatorname{sgn}}
\newcommand{\M}{\mathcal{M}}
\renewcommand{\O}{\mathcal{O}}
\newcommand{\N}{\mathbb{N}}
\newcommand{\R}{\mathbb{R}}
\newcommand{\Rquer}{\overline{\R}}
\newcommand{\ind}{\mathds{1}}
\newcommand{\eps}{\varepsilon}
\renewcommand{\L}{{\text{\tiny L}}}
\newcommand{\LS}{{\text{\tiny LS}}}
\newcommand{\betaL}{\hat\beta_\L}
\newcommand{\betaLS}{\hat\beta_\LS}
\newcommand{\uLS}{\hat u_\LS}
\newcommand{\epsLS}{\hat \eps_\LS}
\renewcommand{\th}{\text{\tiny th}}
\newcommand{\Int}{\text{\tiny int}}
\newcommand{\longto}{\longrightarrow}
\newcommand{\dto}{\overset{d}{\longrightarrow}}
\newcommand{\pto}{\overset{p}{\longrightarrow}}
\renewcommand{\emptyset}{\varnothing}
\newcommand{\pd}[2]{\frac{\partial #1}{\partial #2}}
\begin{document}

\title{Uniformly Valid Confidence Sets Based on the Lasso}

\author{Karl Ewald and Ulrike Schneider\\[2ex]
Vienna University of Technology} 

\date{}

\maketitle

\begin{abstract}
In a linear regression model of fixed dimension $p \leq n$, we construct
confidence regions for the unknown parameter vector based on the Lasso
estimator that uniformly and exactly hold the prescribed in finite samples
as well as in an asymptotic setup. We thereby quantify estimation
uncertainty as well as the ``post-model selection error'' of this
estimator. More concretely, in finite samples with Gaussian errors and
asymptotically in the case where the Lasso estimator is tuned to perform
conservative model selection, we derive exact formulas for computing the
minimal coverage probability over the entire parameter space for a large
class of shapes for the confidence sets, thus enabling the construction of
valid confidence regions based on the Lasso estimator in these settings.
The choice of shape for the confidence sets and comparison with the
confidence ellipse based on the least-squares estimator is also discussed.
Moreover, in the case where the Lasso estimator is tuned to enable
consistent model selection, we give a simple confidence region with minimal
coverage probability converging to one. Finally, we also treat the case of
unknown error variance and present some ideas for extensions.
\end{abstract}

\section{Introduction} \label{sec:intro}

The Lasso estimator as introduced in \cite{Tibshirani96} as well as many
variants thereof have gained strong interest in the statistics community
and in applied areas over the past two decades. As is well known, the main
attraction of the Lasso estimator lies in its ability to perform model
selection and parameter estimation at very low computational cost, see for
instance \cite{AllineyRuzinsky94}, \cite{EfronEtAl04} and
\cite{RossetZhu07}, and in the fact that the estimator can be used in
high-dimensional settings where the number of variables $p$ exceeds the
number of observations $n$ (``$p \gg n$'').


Recent years have seen an increased interest on how to perform valid
inference in connection with these types of estimators.
\cite{PoetscherSchneider10} construct valid confidence intervals based on
the Lasso as well as related estimators in the framework of linear
regression models with orthogonal design and give an in-depth analysis of
the problems and challenges that arise in this context. Generalizations of
these results to a moderate-dimensional (orthogonal) setting where $p \leq
n$ but $p$ diverging with $n$ can be found in \cite{Schneider16}.


In a general high-dimensional setting with $p \gg n$, confidence regions
and confidence intervals in connection with the Lasso estimator have
recently been treated by different approaches. Based on
\cite{ZhangZhang14}, several papers including \cite{VdGeerEtAl14},
\cite{JavanmardMontanari14}, \cite{CanerKock18} and \cite{VdGeerStucky16}
use the idea of ``de-sparsifying'' the Lasso estimator. In case where $p
\leq n$ this approach essentially reduces to using the least-squares (LS)
estimator for inference. In that sense this theory leaves a gap on how to
construct confidence regions based on the Lasso estimator in a
low-dimensional framework to provide uncertainty quantification for the
Lasso estimator in this case.


\cite{LeeEtAl16} consider finite-sample results for confidence intervals
in connection with the Lasso estimator yet these authors take a different
route in that their intervals are not set to cover the true parameter, but
a pseudo-true value that depends on the selected model and coincides with
the true parameter if the selected model is correct. All inference is
conditional on the selected model. A model-dependent parameter is also
covered in \cite{BerkEtAl13} who discuss an intricate procedure for
obtaining confidence regions for a pseudo-true parameter in connection with
arbitrary model selection procedures. 


\emph{In this paper, we construct confidence sets based on the Lasso
estimator for the entire unknown parameter vector. We stress that while in
the low-dimensional case the LS estimator can be employed to build
confidence regions, the Lasso estimator is still used in such a framework,
naturally entailing the question on how to conduct valid inference, and our
results also quantify the worst-case estimation (``post-model selection'')
error of this method. Moreover, \cite{SchneiderEwald17TR} show that in high
dimensions, the Lasso estimator may in fact act as a low-dimensional
procedure in which case the results of this paper can also be applied.}


One of the challenges of this task lies in the well-known fact that the
finite-sample distribution of the Lasso estimator depends on the unknown
parameter in a complicated manner. This phenomenon does not vanish for
large samples as can be seen within a so-called moving-asymptotic framework
(see \cite{PoetscherLeeb09} for a detailed analysis in orthogonal design)
and also occurs for related estimators. 
In order to construct valid confidence sets, we need to know the smallest
coverage probability occurring over the whole parameter space.
\cite{PoetscherSchneider10} derive a formula for the minimal coverage
probability of fixed-width confidence intervals based on the Lasso
estimator in one dimension using knowledge of its finite-sample
distribution. In the general case, this finite-sample distribution is not
known, so it is not clear how to obtain an expression for the coverage
probability in more than one dimension. Additionally, this coverage
probability clearly depends on the shape that is used for the confidence
set and it is a not clear a priori what this shape should be. We do the
following.


While the finite sample distribution and therefore the coverage probability
for any kind of set based on the Lasso estimator is unknown in general
dimensions, we show that computing the \emph{minimal} coverage probability
can actually be carried out without this explicit knowledge. We obtain an
explicit formula for the minimal coverage probability by, in a way,
deferring the minimization problem into the objective function that defines
the estimator, as is depicted in Section~\ref{sec:finitesample}. For the
confidence regions, we consider a large class of shapes that is determined
by a condition involving the regressor matrix. This class encompasses the
elliptic shape one would use if the confidence region was based on the LS
estimator, thus enabling comparisons with the LS confidence ellipse.
Analogously to the fixed-width intervals in \cite{PoetscherSchneider10},
the confidence regions we consider are random only through their centering
at the Lasso estimator (which is also in line with the setup in the
literature for high-dimensional settings, see for instance
\citealt{VdGeerEtAl14}). Asymptotically, we distinguish between two regimes
for the tuning parameters which we call conservative and consistent tuning.
As suggested from the results in \cite{PoetscherSchneider10}, our results
from finite samples essentially carry over asymptotically when the
estimator is tuned conservatively. In the case of consistent tuning, the
uniform convergence rate of the estimator is slower than $n^{-1/2}$ and we
give the asymptotic distribution of the Lasso estimator when scaled by the
appropriate factor corresponding to the uniform convergence rate, as well
as suggesting a simple construction for a confidence set in that case.


The remaining paper is organized as follows. In Section~\ref{sec:setting}
we set the framework by stating the model, defining the estimator and
introducing some notation. The main result giving the formula for the
minimal coverage probability is presented in Section~\ref{sec:finitesample}
and subsequently Section~\ref{sec:construct} is devoted to discussing how
to concretely construct the corresponding confidence sets, as well as their
relationship to the confidence ellipse based on the LS estimator. We treat
the case of unknown error variance in Section~\ref{sec:extensions}, as well
as several ideas for extensions and further considerations. In
Section~\ref{sec:asymp} we derive asymptotic results both for the case of
conservative and the case of consistent model selection.
Section~\ref{sec:conclusion} concludes. All proofs are deferred to
Appendix~\ref{sec:proofs}.

Literature on distributional properties of the Lasso estimator in the
low-dimensional setting ($p \leq n$) include the often-cited paper by
\cite{KnightFu00} who derive the asymptotic distribution when the estimator
is tuned to perform conservative model selection. \cite{PoetscherLeeb09}
give a detailed analysis in the framework of a linear regression model with
orthogonal design and derive the distribution of the Lasso estimator in
finite samples as well as in the two asymptotic regimes of consistent and
conservative tuning. Implications of these results for confidence intervals
are analyzed in \cite{PoetscherSchneider10} and generalizations to a
moderate-dimensional setting where $p \leq n$ but $p$ diverging with $n$
are contained in \cite{PoetscherSchneider11} and \cite{Schneider16}.

\section{Setting and assumptions} \label{sec:setting}

Consider the linear model 
$$
y = X \beta + \eps,
$$
where $y$ is the observed $n \times 1$ data vector, X the $n \times p$
regressor matrix which is assumed to be non-stochastic with full column
rank $p$, $\beta \in \R^p$ is the true parameter vector and $\eps$ the
unobserved error term defined on some probability space
$(\Omega,\mathcal{A},P)$ and consisting of independent and identically
distributed components with mean 0 and finite variance $\sigma^2$. We
consider a componentwise tuned Lasso estimator $\betaL$, defined as the
unique solution to the minimization problem
$$
\min_{\upbeta \in \R^p} L_n(\upbeta) = \min_{\upbeta \in \R^p}  
\|y - X\upbeta\|^2 + 2 \sum_{j = 1}^p \lambda_{n,j} |\upbeta_j|,
$$
where $\lambda_{n,j}$, are non-negative and non-random componentwise tuning
parameters that allow to exclude parameters from penalization. Note that if
$\lambda_{n,j} = 0$ for all $j$, this estimator is equal to the ordinary
least-squares (LS) estimator $\betaLS$ and that $\lambda_{n,j} = c > 0$ for
all $j$ corresponds to the ``classical'' Lasso estimator as proposed by
\cite{Tibshirani96}. For later use, let $\lambda_n =
(\lambda_{n,1},\dots,\lambda_{n,p})'$ and $\Lambda_n = \diag(\lambda_n)$,
the diagonal matrix whose diagonal elements are given by the components of
$\lambda_n$. We use $\ind_{\{.\}}$ for the indicator function and make the
following obvious definitions. For $a \in \R^p$ and $B \subseteq \R^p$, the
set $a + B = B + a \subseteq \R^p$ is defined as the set $\{a + b : b \in
B\}$. For a $p \times p$ matrix $\bar C$ and a scalar $c$, the sets $\bar C
B$ and $cB$ in $\R^p$ are  $\{\bar Cb : b \in B\} \subseteq \R^p$ and $\{cb
: b \in B\} \subseteq \R^p$, respectively. Finally, for $k \in \N$, $I_k$
stands for the $k \times k $ identity matrix and $\Rquer$ denotes the
extended real line $\R \cup \{-\infty,\infty\}$.

\section{Finite-sample results} \label{sec:finitesample}

We aim to construct confidence sets for the entire parameter vector $\beta$
based on the Lasso estimator $\betaL$. That means that for a non-random set
$M \subseteq \R^p$, we consider sets of the form
$$
\betaL - M = \{\betaL - m : m \in M\},
$$
which have to satisfy that the probability of actually covering the unknown
parameter $\beta$ never (for no value of $\beta$) falls below a prescribed
level $1 - \alpha$ with $\alpha \in [0,1]$. In other words, we need
$P_\beta(\beta \in \betaL - M) \geq 1 - \alpha$ for all $\beta \in \R^p$
(where we stress the dependence of the probability measure on $\beta$
whenever it occurs), so that
$$
\inf_{\beta \in \R^p} P_\beta(\beta \in \betaL - M) \geq 1 - \alpha.
$$
In order to achieve this, we need to be able to compute this ``infimal''
(minimal) coverage probability. \emph{Throughout this and the two
subsequent sections} we suppose that the errors as normally distributed
\begin{equation*}
\eps \sim N(0, \sigma^2 I_n),
\end{equation*}
although our results do not heavily depend on this assumption, also see
Remark~\ref{rem:normassump}. The assumption that will be removed for
asymptotic results in Section~\ref{sec:asymp}. We will show that the
minimum occurs when the components of the unknown parameter become large in
absolute value by essentially doing the following. We reparametrize the
objective function defining the Lasso estimator so that the dependence on
the unknown parameter becomes more transparent and easier to handle. We
then consider the limiting cases of the objective functions when the
components of the unknown parameter vector $\beta$ become large in absolute
value (that is, tend to $+\infty$ or $-\infty$). We will see that it is
possible to minimize the resulting objective functions explicitly, with
minimizers that follow a shifted normal distribution that has the same
covariance matrix as the LS estimator and by construction do not depend on
the unknown parameter. Finally, we will show that the infimal coverage
probability of the proposed sets is indeed ``achieved'' for one of these
finitely many limiting cases.


To state the main theorem, we need several definitions. First we define the
reparame\-trized objective function $Q_n(u) = L_n(\beta + n^{-1/2}u) -
L_n(\beta)$ so that $Q_n$ is uniquely minimized at $\hat u_n =
n^{1/2}(\betaL - \beta)$, the estimation error scaled by $n^{1/2}$. Of
course, this scaling factor is arbitrary in finite samples, but proves to
be of advantage when considering the problem in large samples in
Section~\ref{subsec:conservative}. We can write $Q_n$ as
$$
Q_n(u) = u'C_n u - 2 u'W_n + 2n^{-1/2}\sum_{j=1}^p
\lambda_{n,j} \left[|u_j + n^{1/2}\beta_j| - |n^{1/2}\beta_j|\right],
$$
where $C_n = X'X/n$ and $W_n =  n^{-1/2}X'\eps \sim N(0,\sigma^2 C_n)$.
Note that for a set $M \subseteq \R^p$ we then have
$$
P_\beta(\beta \in \betaL - n^{-1/2}M) = P_\beta(\hat u_n \in M).
$$
The above mentioned limiting cases of the objective function that we
consider are defined as
\begin{equation} \label{eq:Qnd}
Q_n^d(u) = u' C_n u - 2u'W_n + 2n^{-1/2}\sum_{j=1}^p \lambda_{n,j} d_j u_j,
\end{equation}
where $d = (d_1,\dots,d_p)' \in \{-1,1\}^p$. Holding $W_n$ fixed for a
moment, we indeed see that 
$$
Q^d_n(u) = \lim_{d_j\beta_j \to \infty \atop j=1,\dots,p} Q_n(u).
$$
As shorthand notation, we write $\hat u_n^d$ for the unique minimizer of
$Q_n^d$. To define the shape that we want to consider for the confidence
regions, we introduce the following notation. For $m \in \R^p$, a vector $d
\in \{-1,1\}^p$ and a matrix $\bar C \in \R^{p \times p}$, we define
$$
A_{\bar C}^d(m) = \bigcap_{j=1}^p
\{z \in \R^p: d_j(\bar Cm)_j \leq d_j(\bar Cz)_j, d_j z_j \leq 0\}. 
$$
The set $A_{\bar C}^d(m)$ is an intersection of $2p$ half-spaces, $p$ of
which determine the orthant the set is located in via the parameter $d$.
The other $p$ half-spaces are defined by hyperplanes that intersect at the
point $m$. Figure~\ref{fig:AdSet} shows one example of such a set. Note
that in general, $A_{\bar C}^d(m)$ could be non-empty also for $\sgn(m)
\neq -d$.
\begin{figure}
\centering \includegraphics[width=0.48\textwidth]{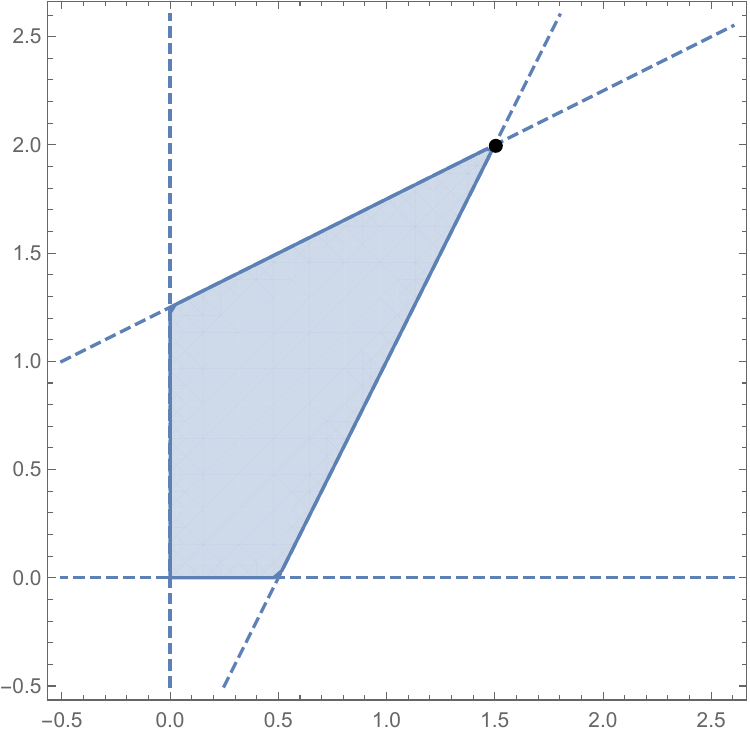}
\caption{\label{fig:AdSet} The set $A_{\bar C}^{-\iota}(m)$ with
$\iota = (1,1)'$, $m =
(1.5,2)'$ and $\bar C = \left(\protect\begin{smallmatrix} 1 & -0.5 \\ -0.5 & 1 \protect\end{smallmatrix}\right)$ 
along with the hyperplanes defining the set. The point $m = (1.5,2)'$ is
displayed as a dot.}
\end{figure} 
The sets we consider are determined by the following condition.

\begin{condition} \label{cond:A} 
Let $\bar C \in \R^{p\times p}$ be given. We say that a set $M \subseteq
\R^p$ satisfies Condition~\ref{cond:A} with matrix $\bar C$ if
$$
A_{\bar C}^d(m) \subseteq M
$$
for all $d \in \{-1,1\}^p$ and for all $m \in M$.
\end{condition}

The above condition will be discussed in more detail in
Section~\ref{sec:construct}. Using this notation, we can now state the main
theorem.

\begin{theorem}\label{ther:infprob}
If $M_n \subseteq \R^p$ is non-random and satisfies Condition~\ref{cond:A}
with $\bar C = C_n$, then
$$
\inf_{\beta \in \R^p} P_\beta(\hat u_n \in M_n) =
\min_{d \in \{-1,1\}^p} P(\hat u_n^d \in M_n),
$$
where $\hat u_n^d \sim N(-n^{-1/2}C_n^{-1}\Lambda_n d,
\sigma^2C_n^{-1})$.
\end{theorem}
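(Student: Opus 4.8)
The plan is to prove the two inequalities ``$\inf_\beta P_\beta(\hat u_n\in M_n)\le\min_d P(\hat u_n^d\in M_n)$'' and ``$\ge$'' separately. The distributional claim about $\hat u_n^d$ is immediate: since $C_n=X'X/n$ has full rank, the first-order condition $2C_nu-2W_n+2n^{-1/2}\Lambda_nd=0$ for $Q_n^d$ has the unique solution $\hat u_n^d=C_n^{-1}W_n-n^{-1/2}C_n^{-1}\Lambda_nd$, and with $W_n\sim N(0,\sigma^2C_n)$ this yields $\hat u_n^d\sim N(-n^{-1/2}C_n^{-1}\Lambda_nd,\sigma^2C_n^{-1})$, a law that does not depend on $\beta$. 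Strict convexity of $Q_n$ and $Q_n^d$ (Hessian $2C_n\succ0$) gives uniqueness of $\hat u_n$ and $\hat u_n^d$ throughout.

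For the ``$\le$'' inequality, fix $d\in\{-1,1\}^p$ and a sequence $\beta^{(k)}$ with $d_j\beta^{(k)}_j\to\infty$, so that eventually $\sgn\beta^{(k)}_j=d_j$ for all $j$. On the polyhedron $R^{(k)}=\{u\in\R^p:d_j(u_j+n^{1/2}\beta^{(k)}_j)\ge0,\ j=1,\dots,p\}$ one has $|u_j+n^{1/2}\beta^{(k)}_j|-|n^{1/2}\beta^{(k)}_j|=d_ju_j$, so the objective $Q_n$ built with true value $\beta^{(k)}$ agrees with $Q_n^d$ on $R^{(k)}$; moreover $Q_n\ge Q_n^d$ on all of $\R^p$ since $|u_j+n^{1/2}\beta^{(k)}_j|\ge d_j(u_j+n^{1/2}\beta^{(k)}_j)$. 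Hence whenever $\hat u_n^d\in R^{(k)}$ the point $\hat u_n^d$ also minimizes $Q_n$, so $\hat u_n=\hat u_n^d$. For each fixed $u$ we have $u\in R^{(k)}$ for all large $k$, so $P(\hat u_n^d\in R^{(k)})\to1$ by dominated convergence, and therefore $|P_{\beta^{(k)}}(\hat u_n\in M_n)-P(\hat u_n^d\in M_n)|\le P(\hat u_n^d\notin R^{(k)})\to0$. Thus $\inf_\beta P_\beta(\hat u_n\in M_n)\le P(\hat u_n^d\in M_n)$ for every $d$, and the claim follows.

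For the ``$\ge$'' inequality I would prove the pointwise bound $P_\beta(\hat u_n\in M_n)\ge\min_d P(\hat u_n^d\in M_n)$ for every fixed $\beta$. Pick any $\bar d\in\{-1,1\}^p$ with $\bar d_j=\sgn\beta_j$ on the coordinates with $\beta_j\ne0$ (arbitrary sign elsewhere), and define pathwise $d^\ast_j=-\sgn\hat u_{n,j}$, with $d^\ast_j=\bar d_j$ on the (possibly positive-probability) event $\{\hat u_{n,j}=0\}$; then $d^\ast_j\hat u_{n,j}\le0$ for all $j$. The Lasso first-order condition reads $C_n\hat u_n=W_n-n^{-1/2}\Lambda_n\hat s$ with $\hat s_j\in[-1,1]$ and $\hat s_j=\sgn(\hat u_{n,j}+n^{1/2}\beta_j)$ whenever that argument is nonzero; subtracting the identity $C_n\hat u_n^{\bar d}=W_n-n^{-1/2}\Lambda_n\bar d$ gives $C_n(\hat u_n-\hat u_n^{\bar d})=n^{-1/2}\Lambda_n(\bar d-\hat s)$. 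Consequently the two families of constraints defining $A_{C_n}^{d^\ast}(\hat u_n^{\bar d})$ reduce to $d^\ast_j\hat u_{n,j}\le0$ (true by the choice of $d^\ast$) and to $n^{-1/2}\lambda_{n,j}\,d^\ast_j(\bar d_j-\hat s_j)\ge0$, i.e.\ $d^\ast_j\hat s_j\le d^\ast_j\bar d_j$. This sign inequality holds in every case: when $d^\ast_j\bar d_j=1$ it is trivial, and the only potentially bad configuration $\hat s_j=d^\ast_j$, $\bar d_j=-d^\ast_j$ cannot occur, because $\hat s_j=-\sgn\hat u_{n,j}$ forces $\sgn\beta_j=-\sgn\hat u_{n,j}$ and hence $\bar d_j=d^\ast_j$. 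Therefore $\hat u_n\in A_{C_n}^{d^\ast}(\hat u_n^{\bar d})$, and since $M_n$ satisfies Condition~\ref{cond:A} with $\bar C=C_n$, applying it with direction $d^\ast$ and point $m=\hat u_n^{\bar d}$ shows that $\hat u_n^{\bar d}\in M_n$ implies $\hat u_n\in M_n$ pointwise. Hence $\{\hat u_n^{\bar d}\in M_n\}\subseteq\{\hat u_n\in M_n\}$, so $P_\beta(\hat u_n\in M_n)\ge P(\hat u_n^{\bar d}\in M_n)\ge\min_d P(\hat u_n^d\in M_n)$; taking $\inf_\beta$ and combining with the first part yields equality.

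The main obstacle, and the non-obvious part of the argument, is the choice of pairing in the ``$\ge$'' step: Condition~\ref{cond:A} has to be invoked at the \emph{fixed} base point $\hat u_n^{\bar d}$, whose law is free of $\beta$, while the half-space directions must be taken to be the \emph{data-driven} vector $d^\ast=-\sgn\hat u_n$; the point that makes this work is that the Lasso optimality condition collapses the otherwise delicate hyperplane inequalities in $A_{C_n}^{d^\ast}(\cdot)$ into the elementary sign inequalities $d^\ast_j(\bar d_j-\hat s_j)\ge0$. The surrounding bookkeeping — the degenerate cases $\hat u_{n,j}=0$, $\hat u_{n,j}+n^{1/2}\beta_j=0$, and $\beta_j=0$ (the last harmless precisely because any sign of $\bar d_j$ is then admissible), and the measurable selection of $d^\ast$ on all of $\Omega$ — is routine but should be carried out carefully.
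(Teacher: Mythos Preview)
Your argument is correct and rests on the same geometric idea as the paper's proof, but is organized more directly. For the ``$\ge$'' direction the paper first treats the orthant $\O^\iota$ (Proposition~\ref{prop:infprobiota}): two facts from directional derivatives give $(C_n\hat u_n^\iota)_j\le(C_n\hat u_n)_j$, with equality when $\hat u_{n,j}>0$, which places $\hat u_n$ in $\bigcap_j\bigl(A^{\iota_j}_{C_n,j}(\hat u_n^\iota)\cup B^{\iota_j}_{C_n,j}(\hat u_n^\iota)\bigr)$; Lemma~\ref{lem:condA} then rewrites this as $\bigcup_dA^d_{C_n}(\hat u_n^\iota)$, and a sign-flip $D=\diag(d)$ reduces the general orthant to this case. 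You collapse these three steps by fixing $\bar d=\sgn\beta$ at once and choosing the data-dependent direction $d^\ast=-\sgn\hat u_n$, so that the subdifferential identity $C_n(\hat u_n-\hat u_n^{\bar d})=n^{-1/2}\Lambda_n(\bar d-\hat s)$ lands $\hat u_n$ directly in $A^{d^\ast}_{C_n}(\hat u_n^{\bar d})$; your coordinatewise choice of $d^\ast_j$ is exactly what Lemma~\ref{lem:condA} does implicitly, so you avoid the auxiliary $B$-sets and the orthant reduction. For the ``$\le$'' direction the paper controls $\hat u_n$ through $\uLS$ via Proposition~\ref{prop:lsdifflasso}, while your observation $Q_n\ge Q_n^d$ with equality on the growing region $R^{(k)}$ is more self-contained and yields $\hat u_n=\hat u_n^d$ on $\{\hat u_n^d\in R^{(k)}\}$ directly. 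One small point of phrasing: ``the only potentially bad configuration $\hat s_j=d^\ast_j$'' understates, since any $\hat s_j\ne-d^\ast_j$ would violate the inequality when $\bar d_j=-d^\ast_j$; but as you note, $\bar d_j=-d^\ast_j$ forces $\hat u_{n,j}\ne0$ and $\sgn\beta_j=\sgn\hat u_{n,j}$ (or $\beta_j=0$), whence $\hat u_{n,j}+n^{1/2}\beta_j\ne0$ and $\hat s_j=\sgn\hat u_{n,j}=-d^\ast_j$, so the intermediate case $\hat s_j\in(-1,1)$ is indeed excluded.
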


The distributions of $\hat u^d_n$ determining the formula for the infimal
coverage probability are shifted normal distributions with the same
covariance matrix as the corresponding (shifted and scaled) LS
estimator $\uLS = n^{1/2}(\betaLS - \beta)$ and mean that depends on the
regressors and the vector of tuning parameters. 

\begin{remark} \label{rem:normassump}
Note that (the proof of) Theorem~\ref{ther:infprob} does not hinge on the
normality assumption, as it exploits the structure of the underlying
optimization problem rather than stochastic properties of the error
distribution. Different error distributions could be used in
Theorem~\ref{ther:infprob}, only the distributions of $\hat u_n^d$ would
have to be adapted accordingly.
\end{remark}

Since Condition~\ref{cond:A} for $p=1$ simply requires the corresponding
set $M_n$ to be an interval containing zero, Theorem~\ref{ther:infprob} is
indeed a generalization of the formula in Theorem 5(a) in
\cite{PoetscherSchneider10}, as discussed in the introduction. (To make the
connection, note that the tuning parameter $\eta_n$ in that reference
corresponds to a component $n^{-1/2}\lambda_{n,j}$ of the vector of tuning
parameters in our paper.) The following obvious corollary specifies the
resulting valid confidence region based on the Lasso estimator.

\begin{corollary} \label{cor:mincov}
Let $0 < \alpha < 1$. If $M_n \subseteq \R^p$ is non-random and satisfies
Condition~\ref{cond:A} with $\bar C = C_n$, as well as $\min_{d \in
\{-1,1\}^p} P(\hat u_n^d \in M_n) = 1 - \alpha$ with $\hat u_n^d \sim
N(-n^{-1/2}C_n^{-1}\Lambda_n d,\sigma^2C_n^{-1})$, then
$$
\inf_{\beta \in \R^p} P_\beta(\beta \in \betaL - n^{-1/2}M_n) = 1 - \alpha.
$$
\end{corollary}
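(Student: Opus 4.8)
The plan is to deduce the Corollary directly from Theorem~\ref{ther:infprob}; the only work is the elementary bookkeeping that turns a coverage statement about $\beta$ into a statement about the scaled estimation error $\hat u_n = n^{1/2}(\betaL-\beta)$, and this reduction has already been recorded in Section~\ref{sec:finitesample}.

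First I would note, for an arbitrary fixed $\beta\in\R^p$ and any non-random $M\subseteq\R^p$, the exact set identity
\[
\{\beta\in\betaL-n^{-1/2}M\}=\{\hat u_n\in M\},
\]
which holds because $\beta=\betaL-n^{-1/2}m$ for some $m\in M$ is equivalent to $m=n^{1/2}(\betaL-\beta)=\hat u_n\in M$. Consequently $P_\beta(\beta\in\betaL-n^{-1/2}M_n)=P_\beta(\hat u_n\in M_n)$ for every $\beta$, and taking the infimum over $\beta\in\R^p$ on both sides gives $\inf_{\beta}P_\beta(\beta\in\betaL-n^{-1/2}M_n)=\inf_{\beta}P_\beta(\hat u_n\in M_n)$.

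Since $M_n$ is non-random and satisfies Condition~\ref{cond:A} with $\bar C=C_n$, Theorem~\ref{ther:infprob} applies and shows that this last infimum equals $\min_{d\in\{-1,1\}^p}P(\hat u_n^d\in M_n)$, where $\hat u_n^d\sim N(-n^{-1/2}C_n^{-1}\Lambda_n d,\sigma^2C_n^{-1})$. By the remaining hypothesis this minimum equals $1-\alpha$, which is the claimed identity.

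For the Corollary itself there is thus no genuine obstacle — it is a one-line consequence of the theorem together with the above translation. All of the difficulty is hidden inside Theorem~\ref{ther:infprob}, which I am permitted to assume here: the hard part there is to pass from $Q_n$ to the limiting objective functions $Q_n^d$ of~\eqref{eq:Qnd} as the signed coordinates $d_j\beta_j\to\infty$, to show that the minimizers $\hat u_n^d$ are the limits of $\hat u_n$ along such parameter sequences and carry the stated normal laws, and then to exploit Condition~\ref{cond:A} on $M_n$ to argue that the coverage probability $P_\beta(\hat u_n\in M_n)$ is driven down to one of these finitely many limiting values.
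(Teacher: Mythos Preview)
Your proposal is correct and matches the paper's approach exactly: the paper calls this corollary ``obvious'' and does not give a separate proof, since it follows immediately from Theorem~\ref{ther:infprob} via the identity $P_\beta(\beta\in\betaL-n^{-1/2}M_n)=P_\beta(\hat u_n\in M_n)$ already recorded in Section~\ref{sec:finitesample}. Your final paragraph sketching the content of Theorem~\ref{ther:infprob} is accurate but unnecessary for the corollary itself.
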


\section{Constructing the confidence set} \label{sec:construct}

We now turn to discussing the important matter of how to choose an
appropriate set $M_n \subseteq \R^p$ for some desired level of confidence
$1 - \alpha$ by discussing concrete shapes for the confidence regions as
well as their size and relation to confidence sets based on the LS
estimator. As mentioned in the previous section, we need to find a set $M_n
\subseteq \R^p$ that satisfies Condition~\ref{cond:A} with $ \bar C= C_n$
and such that $\min_{d \in \{-1,1\}^p} P(\hat u_n^d \in M_n) = 1 - \alpha$
where
$$
\hat u_n^d \sim N(-n^{-1/2}C_n^{-1}\Lambda_n d, \sigma^2 C_n^{-1}).
$$
The resulting confidence set for $\beta$ is then the scaled and shifted set
$\betaL - M_n/n^{1/2}$. If we would base the set on the LS estimator
$\betaLS$ instead of $\betaL$, the canonical and best choice for $M_n$ in
terms of volume is an ellipse determined by the contour lines of a
$N(0,\sigma^2C_n^{-1})$-distribution, the \emph{$C_n$-ellipse}. Given the
fact that the covariance matrix of the distributions of $\hat
u^d_n$ is in fact $\sigma^2C_n^{-1}$, in addition to the fact that the
means of the distributions average to 0, it is reasonable to consider the
$C_n$-ellipse as a shape in connection with the Lasso estimator also. As
stated in the following proposition, this shape complies with
Condition~\ref{cond:A}.

\begin{proposition} \label{prop:Cellipse}
The $C_n$-ellipse given by
$$
E_{C_n}(k)=\{z \in \R^p: z'C_n z \leq k\}
$$
satisfies Condition~\ref{cond:A} with $\bar C = C_n$ for any $k > 0$.
\end{proposition}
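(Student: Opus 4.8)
The plan is to verify Condition~\ref{cond:A} directly: fix $d \in \{-1,1\}^p$ and $m \in E_{C_n}(k)$, and show that every $z \in A_{C_n}^d(m)$ satisfies $z'C_n z \leq k$. The key idea is that the half-space constraints defining $A_{C_n}^d(m)$ force $z$ to be, coordinatewise in a suitable sense, "between $0$ and $m$" relative to the geometry induced by $C_n$, and since both $0$ and $m$ lie in the ellipse (which is convex), a convexity-type argument should pin $z$ inside as well. Concretely, the constraints $d_j z_j \leq 0$ for all $j$ place $z$ in the same closed orthant as $-d$, while the constraints $d_j (C_n m)_j \leq d_j (C_n z)_j$ say that $C_n z - C_n m$ lies in the orthant determined by $d$, i.e. $d_j(C_n(z-m))_j \geq 0$ for all $j$.

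The main computation is to expand $z'C_n z$ and bound it. Writing $z' C_n z = m'C_n m + 2(z-m)'C_n m + (z-m)'C_n(z-m)$ is awkward because the last term has no obvious sign control. Instead I would use the decomposition $z'C_n z = z'C_n m + z'C_n(z - m)$ and then $z'C_n m = m'C_n m + (z-m)'C_n m$. A cleaner route: note $z'C_n z = z'C_n(z-m) + (z-m)'C_n m + m'C_n m$, and try to show the first two terms are $\leq 0$. For the term $(z-m)'C_n m = \sum_j (z-m)_j (C_n m)_j$ there is no direct sign; the controllable quantities are instead $\sum_j z_j (C_n(z-m))_j = \sum_j z_j \cdot d_j \cdot |(C_n(z-m))_j|$, and since $d_j z_j \leq 0$ each summand is $\leq 0$, giving $z'C_n(z-m) \leq 0$, hence $z'C_n z \leq z'C_n m$. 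Symmetrically, I would argue $z'C_n m \leq m'C_n m$: write $z'C_n m = m'C_n m + (z-m)'C_n m$ and observe $(z-m)'C_n m = \sum_j (C_n(z-m))_j m_j = \sum_j d_j|(C_n(z-m))_j| m_j$; this needs $d_j m_j \leq 0$, which is \emph{not} assumed. So the symmetric step must instead be run through $C_n z$: we have $(z-m)'C_n z = \sum_j d_j |(C_n(z-m))_j| z_j \leq 0$, i.e. $m'C_n z \geq z'C_n z$. Combining $z'C_n z \leq m'C_n z$ and (by the same inequality applied the other way, or by symmetry of $C_n$ together with $z'C_n z \leq z'C_n m$) one gets $z'C_n z \leq z'C_n m = m'C_n z \leq \sqrt{z'C_n z}\sqrt{m'C_n m}$ by Cauchy–Schwarz in the $C_n$ inner product. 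This yields $\sqrt{z'C_n z} \leq \sqrt{m'C_n m} \leq \sqrt{k}$, which is exactly $z \in E_{C_n}(k)$.

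So the structure is: (1) translate the two families of half-space constraints into the sign statements $d_j z_j \leq 0$ and $d_j(C_n(z-m))_j \geq 0$; (2) deduce $z'C_n(z-m) \leq 0$, hence $\langle z,z\rangle_{C_n} \leq \langle z,m\rangle_{C_n}$; (3) apply Cauchy–Schwarz for the positive-definite form $C_n$ to conclude $\|z\|_{C_n} \leq \|m\|_{C_n} \leq \sqrt{k}$. The main obstacle is step (2): one must be careful to pair the sign information with $z$ (not $m$), since no assumption controls the sign of $m_j$ or of $d_j m_j$ — the condition allows $A_{C_n}^d(m)$ to be nonempty even when $\sgn(m) \neq -d$, as the paper explicitly notes, so any argument that implicitly assumes $m$ lies in the $-d$ orthant is wrong. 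Once the inner-product inequality $\langle z, z-m\rangle_{C_n}\leq 0$ is established the rest is the standard fact that $\langle z,z\rangle \leq \langle z,m\rangle \leq \|z\|\,\|m\|$ forces $\|z\|\leq\|m\|$.
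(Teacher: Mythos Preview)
Your argument is correct and follows essentially the same route as the paper: both extract from the half-space constraints the inequality $z'C_n(z-m)\leq 0$ (the paper writes it as $y'C(m-y)\geq 0$ via $(Dy)'DC(m-y)\geq 0$), and both conclude $z'C_nz\leq m'C_nm$. The only cosmetic difference is the final step---you invoke Cauchy--Schwarz in the $C_n$-inner product, whereas the paper uses $(m-z)'C_n(m-z)\geq 0$ to obtain the chain $m'C_nm\geq m'C_nz\geq z'C_nz$ directly.
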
 

How to choose the parameter $k$ for a given level of coverage $1 - \alpha$
is stated in the next proposition.

\begin{proposition} \label{prop:dstar}
For any $k > 0$, we have that
$$
\argmin_{d \in \{-1,1\}^p} P\left(\hat u_n^d \in E_{C_n}(k)\right) = 
\argmax_{d \in \{-1,1\}^p} \|C_n^{-1/2}\Lambda_n d \|.
$$
\end{proposition} 
Note that if $d^*$ solves the above optimization problem, so does $-d^*$.
To finally obtain the confidence ellipse based on the Lasso estimator, pick
any such optimizer $d^*$ and compute $k^*>0$ so that $P(u_n^{d^*} \in
E_{C_n}(k^*)) = 1 - \alpha$, which is easily done based on the following
proposition.

\begin{proposition} \label{prop:kstar}
For $0 < \alpha < 1$ we have $P(\hat u_n^d \in E_{C_n}(\sigma^2 \kappa)) =
1 - \alpha$ for
$$
\kappa = (\chi^2_{p,\nu})^{-1}(1-\alpha),
$$
where $(\chi^2_{p,\nu})^{-1}$ is the quantile function of a non-central
$\chi^2$-distribution with $p$ degrees of freedom and non-centrality parameter
$$ 
\nu = \frac{1}{n\sigma^2} d'\Lambda_nC_n^{-1}\Lambda_nd.
$$
\end{proposition} 

Note that Proposition~\ref{prop:kstar} also shows that the ellipse
$E_{C_n}(k^*)$, and therefore the resulting confidence set based on the
Lasso estimator, is larger in volume than the one based on the LS
estimator, since $P(\beta \in \betaLS - E_{C_n}(\sigma^2\kappa)) = 1 -
\alpha$ is satisfied for $\kappa = (\chi^2_p)^{-1}(1-\alpha)$ where
$(\chi^2_p)^{-1}$ is the quantile function of a (central)
$\chi^2$-distribution with $p$ degrees of freedom. Clearly, the difference
in size will increase as the tuning parameters become large as then the
non-centrality parameter $\nu$ will grow. These observations are in line
with the findings in \cite{PoetscherSchneider10} who show that a confidence
interval based on the Lasso estimator is larger than a confidence interval
based on the LS estimator with the same coverage probability.

When comparing the two confidence sets, we emphasize that since the
ellipses are centered at different values, the smaller ellipse based
on the LS estimator is in general \emph{not} contained in the ellipse
based on the Lasso estimator. This, as well as the difference in
volume between the two ellipses, will also be illustrated in the
example below.


It is quite obvious that the $C_n$-ellipse is not optimal as a shape for
confidence sets based on the Lasso estimator since we can get higher
coverage with a set of the same volume by adjusting the ellipse ``towards''
the contour lines of the $N(-n^{-1/2}C_n^{-1}\Lambda_n
d^*,\sigma^2C_n^{-1})$-distributions (in such a way that
Condition~\ref{cond:A} is preserved). To find the best shape possible, one
would have to minimize the volume of the set over all possible shapes
satisfying Condition~\ref{cond:A} subject to the constraint of holding the
prescribed minimal coverage probability. This is a highly complex
optimization problem and we do not dwell further on this subject here, but
illustrate possible ways to construct ``good'' sets, as shown in the
example below. Before discussing this further, note that the following
proposition shows that it is easy to find the closure of an arbitrary
subset of $\R^p$ with respect to Condition~\ref{cond:A}.

\begin{proposition} \label{prop:closeA} 
For any $M \subseteq \R^p$, the set
$$
\bigcup_{m \in M} \bigcup_{d \in \{-1,1\}^p} A^d_{\bar C}(m)
$$
is the smallest set containing $M$ that satisfies
Condition~\ref{cond:A}.
\end{proposition}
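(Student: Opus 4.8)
The plan is to verify three things about the set $M^\circ:=\bigcup_{m\in M}\bigcup_{d\in\{-1,1\}^p}A_{\bar C}^{d}(m)$ from the statement: that it contains $M$, that it satisfies Condition~\ref{cond:A}, and that it is contained in every set with those two properties. Together these say precisely that $M^\circ$ is the smallest Condition~\ref{cond:A} set containing $M$.

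Containment of $M$ and minimality are the routine parts. For $M\subseteq M^\circ$, note that $z=m$ satisfies the first inequality in the definition of $A_{\bar C}^{d}(m)$ trivially for every $j$, so $m\in A_{\bar C}^{d}(m)$ as soon as $d_jm_j\le 0$ for all $j$; choosing such a $d$ (e.g.\ $d=-\sgn(m)$, with the components where $m_j=0$ filled in arbitrarily) puts $m$ in $M^\circ$. For minimality, suppose $N\supseteq M$ satisfies Condition~\ref{cond:A}; then for each $m\in M\subseteq N$ and each $d$ we get $A_{\bar C}^{d}(m)\subseteq N$ directly from Condition~\ref{cond:A}, and taking the union over $m$ and $d$ yields $M^\circ\subseteq N$. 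So $M^\circ$ will be the minimum as soon as it is itself shown to satisfy Condition~\ref{cond:A}.

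That is the substantive step: for every $m^*\in M^\circ$ and every $d^*\in\{-1,1\}^p$ one must have $A_{\bar C}^{d^*}(m^*)\subseteq M^\circ$. Writing $m^*\in A_{\bar C}^{d}(m)$ for some $m\in M$ and some $d$, it is enough to prove the nesting property
$$A_{\bar C}^{d^*}(m^*)\ \subseteq\ \bigcup_{d'\in\{-1,1\}^p}A_{\bar C}^{d'}(m),$$
whose right-hand side is part of $M^\circ$ by definition. The case $d^*=d$ is immediate: for $z\in A_{\bar C}^{d}(m^*)$ one chains $d_j(\bar Cm)_j\le d_j(\bar Cm^*)_j\le d_j(\bar Cz)_j$ coordinatewise and keeps $d_jz_j\le 0$, so $z\in A_{\bar C}^{d}(m)$. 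For a general $z\in A_{\bar C}^{d^*}(m^*)$ I would construct the witness sign vector $d'$ componentwise: on the support of $z$ the requirement $d'_jz_j\le 0$ forces $d'_j=-\sgn(z_j)$, which equals $d^*_j$, while off the support $d'_j$ can be chosen freely so that the single scalar inequality linking $(\bar Cm)_j$ and $(\bar Cz)_j$ holds. With $d'$ so defined, the inclusion $z\in A_{\bar C}^{d'}(m)$ then reduces to checking, for $j$ in the support of $z$, that $d^*_j(\bar Cm)_j\le d^*_j(\bar Cz)_j$, which one attempts to obtain by composing $d^*_j(\bar Cm^*)_j\le d^*_j(\bar Cz)_j$ (from $z\in A_{\bar C}^{d^*}(m^*)$) with $d_j(\bar Cm)_j\le d_j(\bar Cm^*)_j$ (from $m^*\in A_{\bar C}^{d}(m)$).

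The main obstacle lies exactly here: at the coordinates $j$ in the support of $z$ at which $d$ and $d^*$ disagree, the two inequalities just mentioned run in opposite directions and cannot be composed, so something more is needed. The remaining information one can bring to bear is the sign data $d_jm^*_j\le 0$ and $d^*_jz_j\le 0$, the nonemptiness of both $A_{\bar C}^{d^*}(m^*)$ and $A_{\bar C}^{d}(m)$, and possibly the cross-coordinate coupling built into $\bar C$; alternatively one may try to replace the anchor $m$ by a better-placed element of $M^\circ$ that still traces back to $M$. I expect this case distinction to carry essentially the entire weight of the argument, the other coordinates and the two reductions above being bookkeeping; Proposition~\ref{prop:closeA} then follows by assembling the three parts.
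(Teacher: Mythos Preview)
Your reductions for $M\subseteq M^\circ$ and for minimality are correct and coincide with the paper's argument. The difficulty you isolate in the ``substantive step'' is genuine, and the case distinction you hope will resolve it does not: the inclusion $A^{d^*}_{\bar C}(m^*)\subseteq\bigcup_{d'}A^{d'}_{\bar C}(m)$ for $m^*\in A^d_{\bar C}(m)$ and $d^*\neq d$ can simply fail. With $\bar C=\left(\begin{smallmatrix}1&1/2\\1/2&1\end{smallmatrix}\right)$, take $m=(-5,13)'$, $m^*=(-1,11)'$ and $z=(3,3)'$; then $\bar Cm=(3/2,21/2)'$, $\bar Cm^*=(9/2,21/2)'$, $\bar Cz=(9/2,9/2)'$, and one checks directly that $m^*\in A^{(1,-1)}_{\bar C}(m)$ and $z\in A^{(-1,-1)}_{\bar C}(m^*)$, while $z\notin\bigcup_{d'}A^{d'}_{\bar C}(m)$ because $z_1,z_2>0$ forces $d'=(-1,-1)$ and $(\bar Cz)_1=9/2>3/2=(\bar Cm)_1$. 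Thus for the singleton $M=\{m\}$ the set $M^\circ$ already fails Condition~A, so the proposition as stated is false and your gap cannot be closed.

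For comparison, the paper's proof establishes only the same-sign nesting $A^d_{\bar C}(y)\subseteq A^d_{\bar C}(m)$ for $y\in A^d_{\bar C}(m)$ and then asserts ``So clearly'' that $M^\circ$ satisfies Condition~A; this is precisely the step you flagged, and the counterexample shows it does not follow. What \emph{is} true is that $M^\circ$ is always contained in the Condition~A closure of $M$ (your minimality argument), and that one further application of the operation $N\mapsto\bigcup_{m\in N}\bigcup_d A^d_{\bar C}(m)$ may be needed; iterating (and using the same-$d$ nesting to control the growth) gives the correct closure, but a single pass as in the stated formula does not suffice in general.
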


We now provide an example for $p = 2$ illustrating the difference between
the confidence ellipse based on the LS estimator and the one based on the
Lasso, as well as how to choose a better shape in terms of volume for the
confidence set based on the Lasso estimator. The simulations and
calculations were carried out using the statistical software package {\tt
R}. The example is set up in the following way. We let $n=20$ and generate
the $(n \times 2)$-matrix $X$ using independent and identically distributed
standard normal entries that are transformed row-wise by an appropriate $(2
\times 2)$-matrix in order to get
$$
C_n = \frac{X'X}{n} = \begin{pmatrix}
1 & -0.5\\
-0.5 & 1 
\end{pmatrix}.
$$
We generate the data vector $y$ from the corresponding linear model with
$\sigma^2 = 1$ (so that $\eps \sim N(0,I_n)$) and true parameter chosen as
$\beta = (1,0)'$. We compute the Lasso estimator using the {\tt
glmnet}-package and tuning parameters $\lambda_{n,1} = \lambda_{n,2} =
\sqrt{n}/2$ (asymptotically corresponding to what we will refer to as
\emph{conservative model selection} in the subsequent section). We also
considered estimators where the tuning parameters were chosen by 10-fold
cross-validation (as provided in the {\tt glmnet}-package) which ended up
yielding comparable results for the estimator.


We then constructed confidence ellipses with level $\alpha = 0.05$ based on
both the LS and the Lasso estimator in the manner described earlier in this
section. The resulting sets are shown in Figure~\ref{fig:LassoVsLS}.
\begin{figure}
\centering
\includegraphics[width=0.48\textwidth]{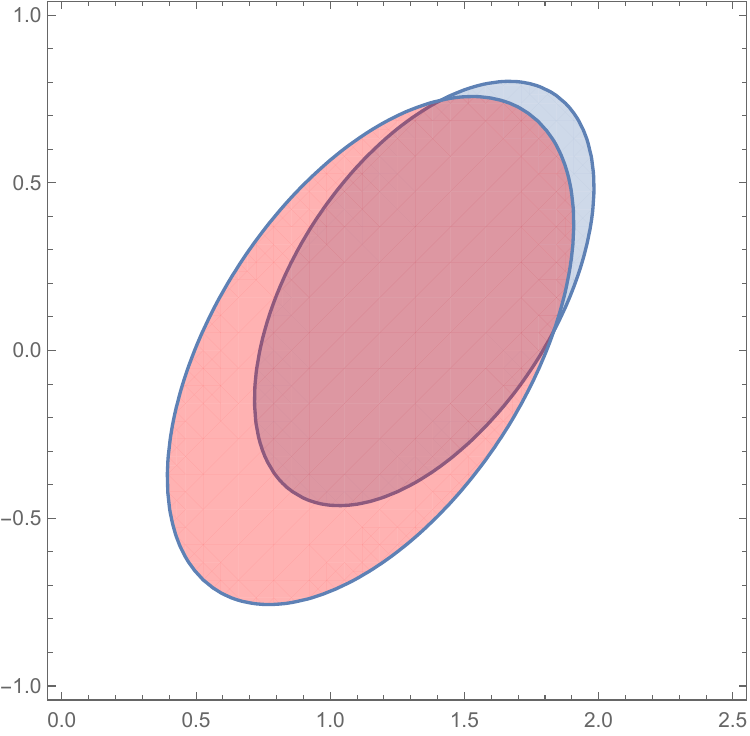} 
\caption{\label{fig:LassoVsLS}The confidence ellipses based on and
centered at the Lasso estimator $\betaL = (1.15, 0)'$ (red) and the
smaller one based on and centered at the LS estimator $\betaLS =
(1.35, 0.17)'$ (blue), respectively.} 
\end{figure}
The plot clearly illustrates the above described fact that the confidence
ellipse based on the Lasso estimator is larger than the confidence ellipse
that is based on the LS estimator. Also, the two sets are overlapping by a
large amount (in fact, the maximal distance between the two estimators is
controlled by Proposition~\ref{prop:lsdifflasso} in the Appendix). However,
the LS ellipse is not entirely contained in the one based on the Lasso,
stressing the fact the Theorem~\ref{ther:infprob} yields non-trivial sets.


The above comparison between the two ellipses, however, is somewhat unfair
in the sense that the shape used for both confidence sets is the optimal
one (in terms of volume) for the LS estimator, but, as discussed above, not
for the Lasso estimator. With the optimal shape for a Lasso confidence set
being unknown, we at least want to find a shape that improves upon the
ellipse. As a basis for this, we consider the union of the contour sets
corresponding to the distributions of $\hat u_n^d$, that is, the $2^p$
shifted $C_n$-ellipses
$$
U_n(k) \; = \bigcup_{d \in \{-1,1\} ^p} E_{C_n}(k) - 
n^{-1/2} C_n^{-1} \Lambda_n d,
$$ 
where each set in the union is of optimal shape for the corresponding
distribution of $\hat u_n^d$. As a starting point, we choose $k$ so that
$P(\hat u_n^d \in E_{C_n}(k) - n^{-1/2} C_n^{-1} \Lambda_n d) = 1 - \alpha$
(note that $k$ is then simply the parameter of the $C_n$-ellipse used for
the LS estimator, but any $k > 0$ such that $U_n(k)$ satisfies $P(\hat
u_n^d \in U_n(k)) \geq 1 - \alpha$ works). Clearly, this set is still too
large and will not satisfy Condition~\ref{cond:A}, so we need to address
these two issues. First, we add all points necessary so that the resulting
set satisfies Condition~\ref{cond:A}. Proposition~\ref{prop:closeA} ensures
that
$$
\bigcup_{m \in U_k} \bigcup_{d \in \{-1,1\}^p} A_{C_n}^d(m)
$$
fulfills the desired condition. Note that in this particular case, it is
fairly straightforward to see that this set is simply given by the convex
hull of the shifted ellipses $U_n(k)$. Finally, to get the smallest set
with this shape that still holds the prescribed level of coverage, we
iteratively adjust the set by reducing the parameter $k$ and re-calculate
the minimal coverage probability of the resulting set until the desired
minimal coverage probability is reached (up to an arbitrary level of
precision). The resulting alternatively shaped set is depicted in
Figure~\ref{fig:lassoCIimproved}, \subref{subfig:construct} showing the
midpoints of the $2^p = 4$ ellipses used in the construction and
\subref{subfig:improved} displaying the new confidence set on top of the
elliptic confidence region based on the Lasso as devised before. It is
obvious that the new shape has slightly less volume than the ellipse.
\begin{figure}
\centering
\subfloat[]{\includegraphics[width=0.48\textwidth]{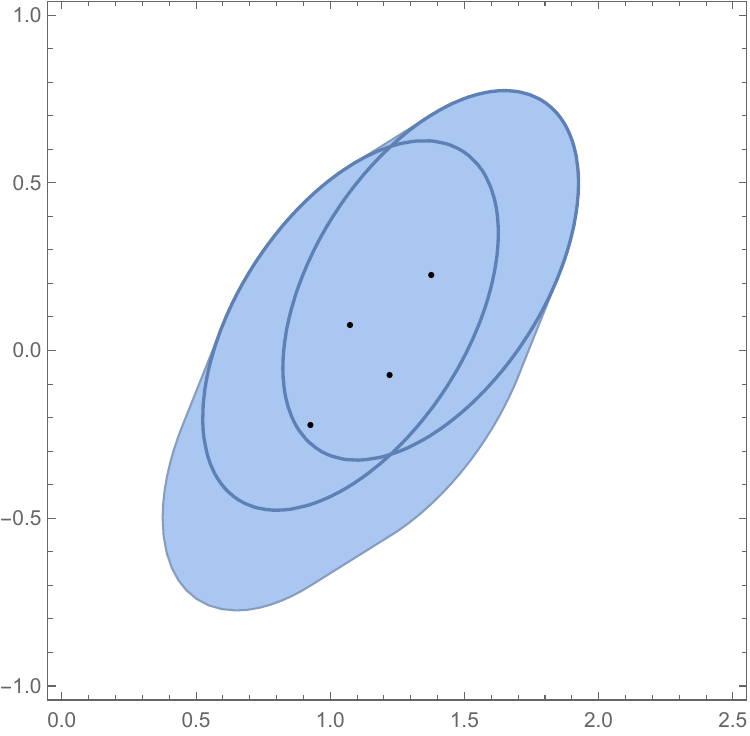}
\label{subfig:construct}}
\subfloat[]{\includegraphics[width=0.48\textwidth]{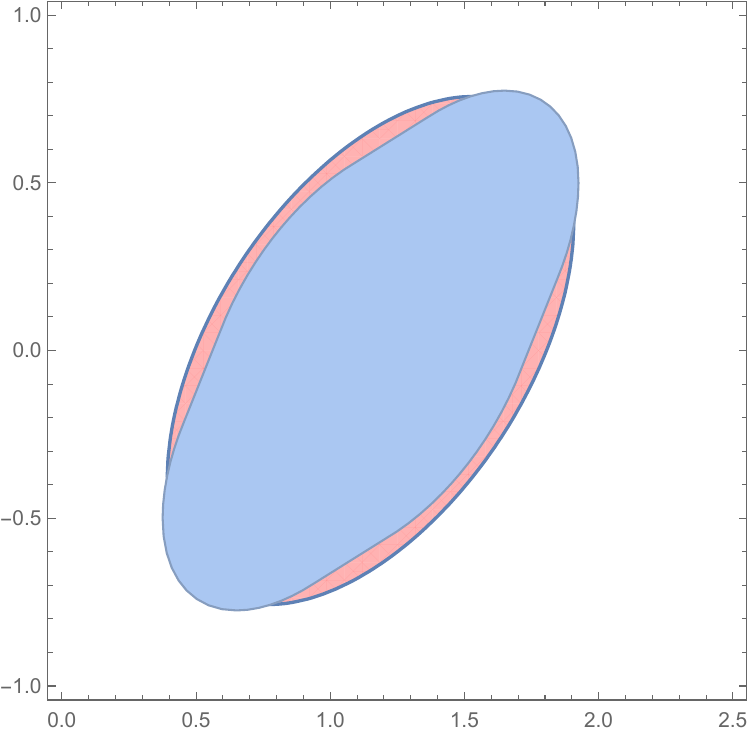}
\label{subfig:improved}}
\caption{\label{fig:lassoCIimproved} \protect\subref{subfig:construct}
Construction of the alternative shape based on $2^p = 4$ ellipses with
their centers displayed as dots. \protect\subref{subfig:improved} The
resulting improved confidence set with the alternative shape (blue)
and the previous elliptic shape (red), both based on at the Lasso
estimator $\betaL = (1.15,0)'$.}
\end{figure}

\section{Extensions and further considerations} \label{sec:extensions}

In Section~\ref{subsec:unknownvar} we extend the previous results for the
case of unknown error variance. Furthermore, we provide some insights on
how the coverage probability of Lasso confidence regions might vary over
the parameter space in Section~\ref{subsec:paramspace} and illustrate some
ideas on how to build confidence intervals for a single component of the
parameter vector in Section~\ref{subsec:paramspace}, the latter two
sections considering for the simple case of $p=2$.

\subsection{Unknown error variance} \label{subsec:unknownvar}

As the results in Section~\ref{sec:construct} on how to construct the
confidence regions use knowledge of the error variance $\sigma^2$. We
now turn to the more realistic setting when the error variance is
unknown and extend our findings to this framework. Let
$$
\hat\sigma^2 = \frac{1}{n-k} \epsLS'\epsLS,
$$
the usual unbiased estimator of $\sigma^2$ based on the LS residuals
$\epsLS = y - X\betaLS$. 

To apply the previous results to this setting, we let the tuning parameter
$\lambda$ depend on the variance estimate in the following way. For this
subsection, set $\lambda_n = \gamma_n/\hat\sigma$, where $\gamma_n \in
\R^p$ with $\gamma_{n,j} \geq 0$ let $\hat u_n^d$ be defined as before.
Since the main argument for proving the results leading up to
Corollary~\ref{cor:mincov} depend on the minimization problem rather than
on stochastic properties, inspection of the corresponding proofs reveals
that the minimal coverage probability can still be computed
correspondingly. Not too surprisingly, rather than using (non-central)
normal distributions, we need to consider (non-central)
$t$-distributions\footnote{A $p$-dimensional multivariate
$T(k,\mu,\Sigma)$ with $k$ degrees of freedom, non-centrality parameter
$\mu \in \R^p$ and positive definite matrix $\Sigma \in \R^{p \times p}$
has Lebesgue density function
$$
f(t) = 
\frac{\Gamma(\frac{k+p}{2})}{\Gamma(\frac{k}{2})(k\pi)^{p/2}|\Sigma|^{1/2}}
\left(1 + \frac{(t-\mu)'\Sigma^{-1}(t-\mu)}{k}\right)^{-\frac{k + p}{2}}.
$$
For $k > 2$, the covariance matrix is given by $\frac{k}{k-2}\Sigma$.} when
the variance is estimated. We summarize this in the following corollary.

\begin{corollary} \label{cor:mincov_unknownvar}
For $\lambda_n = \gamma_n/\hat\sigma$ and if $M_n \subseteq \R^p$ is
non-random and satisfies Condition~\ref{cond:A}, we have that
$$
\inf_{\beta \in \R^p} P_\beta(\beta \in \betaL - \hat\sigma M_n) =
\min_{d \in \{-1,1\}^p} P(\hat u_n^d \in \hat\sigma M_n) =
\min_{d \in \{-1,1\}^p} P(\hat t_n^d \in M_n),
$$
where $\hat t_n^d \sim T(n-p,-n^{-1/2}C_n^{-1}\Gamma_n d,C_n^{-1})$ is a
multivariate non-central $t$ distribution with $n-p$ degrees of freedom,
non-centrality parameter $\mu = -n^{-1/2}C_n^{-1}L_n d$ where $\Gamma_n =
\diag(\gamma_n)$, and matrix $C_n^{-1}$.
\end{corollary}

One can now construct confidence regions in case where $\sigma^2$ is
unknown. Note that the shape of the contour sets of the above
$t$-distribution is the same as for the original distribution of $\hat
u_n$, namely $E_{C_n}(k) = \{z \in \R^p : z'C_nz \leq k\}$. Therefore, all
considerations from Section~\ref{sec:construct} also apply in this setting,
only the choice of the parameter $k$ needs to be adapted.

\subsection{Coverage probabilities over the parameter space}
\label{subsec:paramspace}

Since the derivation of Theorem~\ref{ther:infprob} intimates that the
minimal coverage probability occurs for ``large'' values of the unknown
parameter one might ask how the coverage looks for ``small'' values. As
explicit expressions for the coverage probability are not known, we give
plots of the simulated coverage probability of the 95\% Lasso ellipse for
$p=2$ for positive and negative correlation of the two components in
Figure~\ref{fig:coverage}.
\begin{figure}
\centering
\subfloat[]{\includegraphics[width=0.48\textwidth]{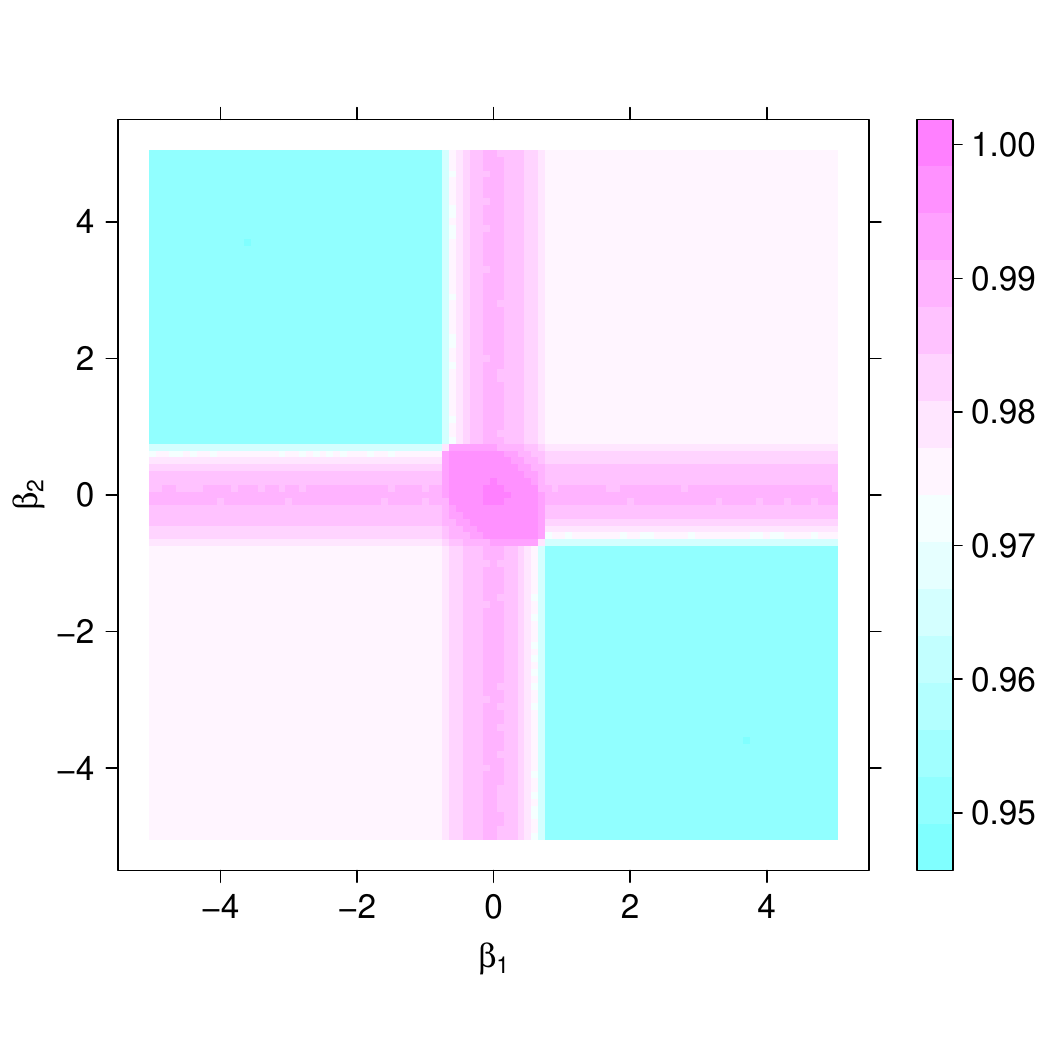}
\label{subfig:coverage1}}
$\;\;$
\subfloat[]{\includegraphics[width=0.48\textwidth]{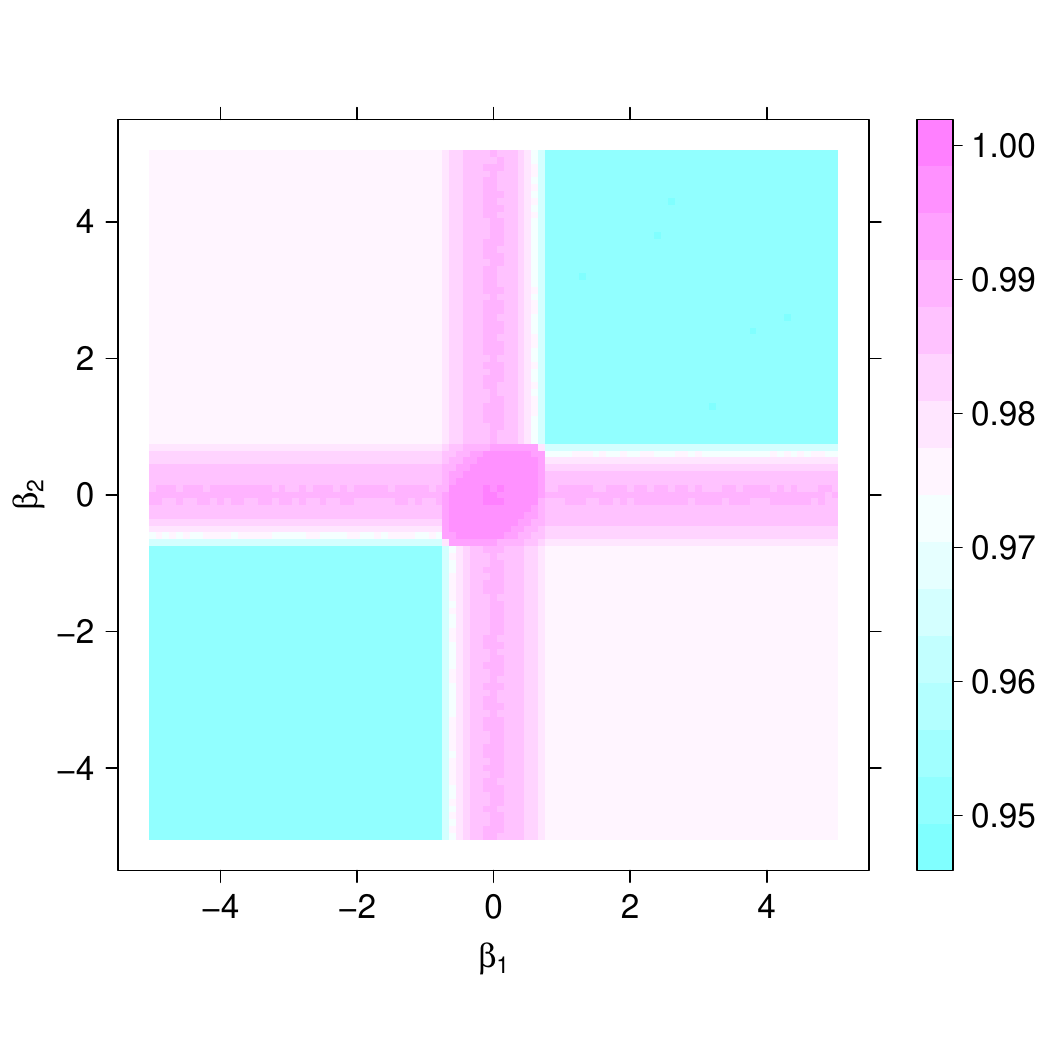}
\label{subfig:coverage2}}
\caption{\label{fig:coverage} The coverage probability of the Lasso
ellipse for $p=2$, $1\!-\!\alpha = 0.95$, $n=20$  and \protect\subref{subfig:coverage1} 
$C_n = 
\left(\protect\begin{smallmatrix} 1 & 0.5 \\ 0.5 & 1 \protect\end{smallmatrix}\right)$ 
and \protect\subref{subfig:coverage2} $C_n = 
\left(\protect\begin{smallmatrix} 1 & -0.5 \\ -0.5 & 1\protect\end{smallmatrix}\right)$.} 
\end{figure}
As can be seen, the minimal coverage occurs when the true parameter is
``not small''. More concretely, for the case of positive correlation it
occurs when both components are of opposite signs, and in case of negative
correlation it occurs when both components are of the same sign. It can
also be seen that in case the parameter space is known to be sparse (for
$p=2$, this means that at least one component of the parameter vector is
equal to 0), the minimal coverage over the restricted parameter space will
certainly be higher than the minimal coverage over the entire parameter
space. We cannot provide analytic expressions for minimal coverage
probability over a sparse parameter space using our theory and it covers
the case where no additional information about the parameter space is
available. It can, however, also be gleaned from Figure~\ref{fig:coverage}
that the common restriction of assuming that the true parameter is either
equal to zero or bounded away from zero (asymptotically at a certain rate)
does not alleviate the situation for the Lasso estimator!

\subsection{Inference on single components} \label{subsec:singlecomp}

We now consider the case where one might be interested in covering only a
subvector of the entire unknown parameter vector. While it is clear that
projecting the confidence region constructed for the entire parameter
vector to the appropriate subspace will yield a valid confidence set for
this purpose, it will generally not result in the most favorable shape.

In this subsection, we assume that the goal is to cover a single component
of the parameter vector and give general considerations on how to determine
the shape of the confidence region for the entire parameter vector so that
the projection onto the single component of interest will yield the
smallest symmetric\footnote{\cite{PoetscherSchneider10} show (for the case
of orthogonal regressors) that for single components, symmetric intervals
are the shortest, we therefore restrict ourselves also the symmetric case
here.} interval possible. 

More formally, consider the following. Assume that we want to construct a
confidence interval for $\beta_j$, the $j$-th component of the unknown
parameter vector $\beta$, with level of coverage $1 - \alpha$. For this, we
want to choose $M \subseteq \R^p$ such that

\begin{itemize}

\item $M$ satisfies Condition~\ref{cond:A}.

\item $\sup_{m \in M} |m_j| = a < \infty$.

\item $\inf_{d \in \{-1,1\}^p} P(\hat u_n^d \in M) = 1 - \alpha$.

\end{itemize}

Clearly, this can be achieved by finding for any fixed but arbitrary $a
\geq 0$ the largest set that satisfies Condition~\ref{cond:A} and then
choosing $a$ so that the prescribed coverage level is achieved. Note that
this set may be unbounded with respect to the components that are not of
interest. We construct the optimal shape for this explicitly for the case
where $p=2$, assuming that both components are penalized (both $\lambda_1$
and $\lambda_2$ are non-zero) in the following section.

\subsubsection*{Constructing the optimal shape in case $p=2$}

The following construction yields the set $M$ as described above for the
case of $p=2$. Without loss of generality, we assume that we are interested
in covering $\beta_1$, the first component of $\beta$. Recall that $C_n =
X'X/n$. If $C_n$ is diagonal, it is easily seen that the set
$$
\tilde M = \{z \in \R^2 : |z_1| \leq a\}
$$
complies with Condition~\ref{cond:A} and cannot be enlarged while
maintaining a fixed projection onto the subspace associated with the
first component. Also note that in this case, the resulting confidence
interval will coincide with the one suggested in \cite{PoetscherSchneider10}.

If $C_n$ is not diagonal, assume that the off-diagonal element $c_{12}$
satisfies $c_{12} > 0$\footnote{Otherwise construct a confidence interval
for $\beta_1$ from the model $y_i = \beta_1 x_{i1} + \tilde\beta_2 x_{i2} +
\eps_i$ where $\tilde\beta_2 = - \beta_2$ and $\tilde x_{i2} = -x_{i2}$.}.
Define
$$
M = \bigcup_{d \in \{-1,1\}^2} M^d
$$
with 
$$
M^{(1,1)} = \tilde M \cap \{z \in \R^2:  z_1,z_2 \geq 0, (C_n z)_1 \leq
(C_n\underbar a)_1\},
$$ 
where $\underbar a = (a,0)'$ and 
$$
M^{(-1,1)} = \tilde M \cap \{z \in \R^2: z_1 \leq 0, z_2 \geq 0, (C_n z)_2
\leq (C_n\underbar b)_2 \},
$$ 
where $\underbar b = (0,b)'$ satisfies $(C_n\underbar a)_1 = (C_n\underbar
b)_1$. Moreover, we define
$$
M^{(-1,-1)} = -M^{(1,1)} \;\; \text{ and } \;\; M^{(1,-1)} = -M^{(-1,1)}.
$$
The shape of the resulting set is depicted in Figure~\ref{fig:Minterval}.
\begin{figure}
\centering
\includegraphics[width=0.48\textwidth]{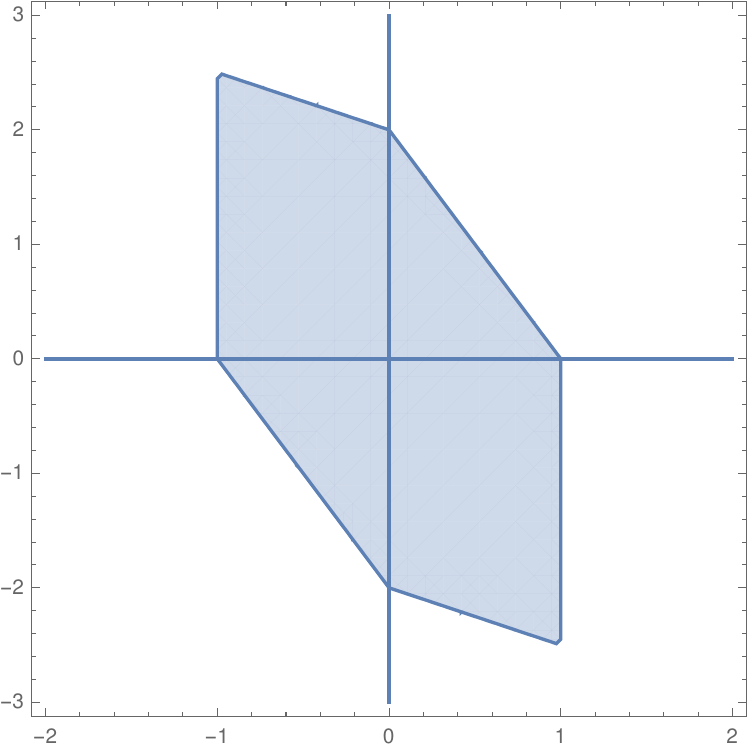}
\caption{\label{fig:Minterval} The set $\M$ for $C = \left(\protect
\begin{smallmatrix} 1 & 0.5 \\ 0.5 & 1 \protect\end{smallmatrix}\right)$ 
and $a = 1$.}
\end{figure}
Note that, even though we are only interested in a confidence set that is
bounded for one of the components, the need to comply with
Condition~\ref{cond:A} forces us to bound the set in the other component as
well whenever $c_{12} \neq 0$. The interpretation of this fact is the
following. As the Lasso can be viewed as a shifted LS estimator where the
size and direction of the shift depend on both components of the LS
estimator, we need to ensure that the influence of the second parameter on
the shift is also corrected for by the procedure.

The following proposition ensures that $M$ satisfies Condition~\ref{cond:A}
and does indeed yield the largest such set with fixed projection $[-a,a]$
onto the first component -- therefore providing the shape that results in
the smallest confidence interval for $\beta_1$.

\begin{proposition} \label{prop:Minterval}
The set $M \subseteq \R^2$ as defined above satisfies
Condition~\ref{cond:A}. Moreover, if another $\bar M \subseteq \R^2$
with $\max_{m \in \bar M} |m_1| \leq a$ satisfies
Condition~\ref{cond:A} also, $\bar M \subseteq M$ follows.
\end{proposition} 

It is again easily seen that for a given coverage probability $1-\alpha$,
the quantity $a$ must be greater than the half-length of the standard
interval based on the LS estimator, that is, the $(1-\alpha/2)$-quantile of
the standard normal distribution. One might now be interested in the size
difference between the confidence intervals constructed from the Lasso and
LS estimates, respectively. \cite{PoetscherSchneider10} have already shown
that in the orthogonal regressor case, the length of confidence intervals
which are based on the Lasso is greater than the length of the standard
interval and that the difference increases with the penalization parameter
$\lambda_n = (\lambda_{n,1},\lambda_{n,2})'$. Table~\ref{tab:halflengths}
contains the required values of $a$, that is, the half-lengths of the Lasso
confidence interval for $c_{11} = c_{22} = 1$, $\sigma^2 = 1$ and various
combinations of $\bar\lambda = \lambda_{n,1} = \lambda_{n,2}$ and $c_{12}$.
Note that in this case the LS estimator is the the Lasso estimator with
$\bar\lambda = 0$. \setlength{\extrarowheight}{2pt}
\begin{table}[htp]
\begin{center}
\begin{tabular}{|c|cccc|}
\hline
$|c_{12}|$ & 0.25 & 0.5 & 0.75 & 0.9 \\
\hline
$\bar\lambda = 0$ & 1.96 & 1.96 & 1.96 & 1.96 \\
$\bar\lambda = 0.1$ & 2.1 & 2.4 & 3.1 & 4.9 \\
$\bar\lambda = 0.5$ & 2.4 & 2.9 & 4.5 & 8.8 \\
$\bar\lambda = 1$ & 3.0 & 3.9 & 6.5 & 13.8  \\
$\bar\lambda = 2$ & 4.4 & 5.9 & 10.5 & 23.8 \\
$\bar\lambda = 3$ & 5.7 & 7.9 & 14.5 & 33.8 \\
\hline
\end{tabular}
\smallskip
\caption{\label{tab:halflengths} Half-lengths of the 95\% confidence
intervals based on an equally tuned Lasso estimator for and $c_{11} =
c_{22} = 1$ and $\sigma^2 = 1$.}
\end{center}
\end{table}
For small values of $\bar\lambda$ and $c_{12}$, the resulting confidence
interval is only slightly longer than the one based on the LS estimator.
For increasing $\bar\lambda$ and $|c_{12}|$, the required length of the
interval increases significantly, in particular in the latter case, with
the length more than doubling as $c_{12}$ increases from 0.25 to 0.9 for
each of the presented values of $\bar\lambda > 0$. This ratio is even more
extreme for larger values of $\bar\lambda > 0$. Two effects are at play
here. On the one hand, the area of $M$ decreases for fixed $a > 0$ as
$c_{12}$ increases. On the other hand, some of the corners of the distorted
$\lambda$-box, $-n^{-1/2}C_n^{-1}\Lambda_n d$ with $d \in \{−1,1\}^p$,
which are the means of the normal distributions that must be covered, shift
further apart as $c_{12}$ increases in absolute value. Obviously,
increasing the tuning parameter also shifts the means further away from the
origin, resulting in even larger confidence sets.

\section{Asymptotic framework} \label{sec:asymp}

We now derive asymptotic results that hold without assuming normality of
the errors. Additionally to the assumptions in Section~\ref{sec:setting},
\emph{for all asymptotic considerations}, we assume that $X =
(x_1',\dots,x_n')'$ where $x_i' \in \R^p$, meaning that the regressor
matrix $X$ changes with $n$ only by appending rows, and that
$$
C_n = \frac{X'X}{n} \longto C
$$
as $n \to \infty$, where $C$ is finite and positive definite. This setting
assures consistency and asymptotic normality of the LS estimator. We will
consider two different regimes of the asymptotic behavior of the tuning
parameter $\lambda_n$ and start with the regime we call \emph{conservative
tuning}.

\subsection{Conservative tuning} \label{subsec:conservative}

In this regime and \emph{throughout this subsection}, we require that
$$
\frac{\lambda_n}{n^{1/2}} \longto \lambda \in [0,\infty)^p
$$
as $n \to \infty$. This implies that $\lambda_{n,j}/n \to 0$ for all $j =
1,\dots,p$ , which in turn implies consistency of $\betaL$ (see Theorem~1
in \cite{KnightFu00} with the slight modification that in our paper we
allow for componentwise defined tuning parameters). We let $\Lambda =
\diag(\lambda)$.

\begin{remark} \label{rem:conservtuning}
Such a choice of tuning parameters indeed yields a conservative model
selection procedure in the sense that
\begin{equation}
\label{eq:conservmsp}
\limsup_{n \to \infty} 
\sup_{\beta \in \R^p} P_\beta \left(\hat\beta_j = 0\right) < 1
\end{equation}
for each $j = 1,\dots,p$. In particular, if $\beta_j = 0$, we have
$$
\limsup_{n \to \infty} P_\beta\left(\hat\beta_j = 0\right) < 1.
$$
The latter statement was also noted by \cite{Zou06} in Proposition 1.
\end{remark}
The following proposition implicitly states the asymptotic distribution of
the estimator in a so-called moving-parameter framework. This proposition
essentially is Theorem~5 from \cite{KnightFu00} and can be proven in the
same manner simply by adjusting for componentwise tuning.

\begin{proposition} \label{prop:Vconserv}
Assume that $n^{1/2}\beta_n \to t \in \Rquer^p$. Then $n^{1/2}(\betaL
- \beta_n) \dto \hat u = \argmin_{u \in \R^p} Q(u)$, where
\begin{equation} \label{eq:Vconserv}
Q(u) = u'Cu - 2 W'u + 2 \sum_{j=1}^p \lambda_j 
\left[\ind_{\{t_j \in \R\}} (|t_j + u_j| - |t_j|) + 
\ind_{\{|t_j| = \infty\}} \sgn(t_j) u_j\right]
\end{equation}
and $W \sim N(0, \sigma^2 C)$.
\end{proposition}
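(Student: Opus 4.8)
The plan is to use the standard epi-convergence / argmax-continuous-mapping machinery for M-estimators whose objective functions are convex, as pioneered in this context by \cite{knifu00} and \cite{poelee09}. First I would record the finite-sample identity from Section~\ref{sec:finitesample}: with $Q_n(u) = L_n(\beta_n + n^{-1/2}u) - L_n(\beta_n)$ we have
$$
Q_n(u) = u'C_n u - 2 u'W_n + 2n^{-1/2}\sum_{j=1}^p \lambda_{n,j}\left[|u_j + n^{1/2}\beta_{n,j}| - |n^{1/2}\beta_{n,j}|\right],
$$
where $W_n = n^{-1/2}X'\eps$ and $n^{1/2}(\betaL - \beta_n) = \argmin_u Q_n(u)$, the minimizer being unique since $C_n$ has full rank. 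The argument then has three ingredients: (i) $C_n \to C$ by assumption; (ii) $W_n \dto W \sim N(0,\sigma^2 C)$ by the Lindeberg--Feller central limit theorem, using that the rows $x_i'$ are such that $C_n \to C$ finite and positive definite (this replaces the Gaussian-error assumption and is exactly where we need the i.i.d.\ finite-variance errors together with the Noether-type condition implied by $C_n\to C$); and (iii) the penalty term converges pointwise in $u$. For the last point, fix $j$ and note that if $t_j \in \R$ then $n^{1/2}\beta_{n,j}\to t_j$, so $|u_j + n^{1/2}\beta_{n,j}| - |n^{1/2}\beta_{n,j}| \to |t_j + u_j| - |t_j|$ directly; if $|t_j| = \infty$, then for $n$ large $n^{1/2}\beta_{n,j}$ has the sign of $t_j$ and dominates $|u_j|$ in absolute value, so the bracket equals $\sgn(t_j)u_j$ for all large $n$, and hence $n^{-1/2}\lambda_{n,j}$-weighted it converges to $\lambda_j \sgn(t_j) u_j$ since $\lambda_{n,j}/n^{1/2}\to\lambda_j$. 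Putting these together, for each fixed $u$ we get $Q_n(u) \dto Q(u)$ jointly over finitely many $u$, and in fact $Q_n \dto Q$ in the sense of convergence of the (random, convex) functions.

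The second step is to pass from convergence of the objective functions to convergence of their minimizers. Since each $Q_n$ is convex in $u$, finite everywhere, and $Q$ is convex with a unique minimizer (its quadratic part $u'Cu$ is positive definite because $C$ is, so $Q$ is coercive and strictly convex), I would invoke the convexity lemma for epi-convergence of convex processes — the result that finite-dimensional convergence of convex random functions implies convergence of argmins, provided the limit has a unique minimizer almost surely (see e.g.\ the arguments in \cite{knifu00} or \cite{poelee09}, or Geyer's/Pollard's convexity lemma). Concretely: finite-dimensional convergence $Q_n \dto Q$ plus convexity upgrades to uniform convergence on compacta (in the Skorokhod-coupled sense), and then a standard argument shows the minimizers must be eventually contained in a compact set and converge to $\argmin Q$. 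This yields $n^{1/2}(\betaL - \beta_n) = \argmin Q_n \dto \argmin Q = \hat u$, which is the claim.

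The one subtlety worth flagging — and the place I expect the bookkeeping to be most delicate rather than deep — is the case where some coordinates of $t$ are infinite and others are finite. One must verify that the pointwise limit $Q(u)$ in \eqref{eq:Vconserv} is still strictly convex and coercive so that $\hat u$ is well-defined: coercivity holds because $u'Cu \to \infty$ as $\|u\|\to\infty$ and the penalty term is bounded below by a linear function of $u$, so the quadratic dominates; strict convexity likewise comes from positive definiteness of $C$. A second minor point is the uniformity needed to claim the minimizers don't escape to infinity — this is handled by the coercivity of $Q$ together with the convexity lemma, exactly as in the cited references. I would therefore present the proof as "follows \cite{knifu00}, Theorem~5, mutatis mutandis for componentwise $\lambda_{n,j}$", filling in the pointwise-limit computation of the penalty term (the genuinely new, if routine, piece) and citing the convexity lemma for the argmin step. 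The main obstacle, such as it is, is purely notational: carefully tracking the $\ind_{\{t_j\in\R\}}$ versus $\ind_{\{|t_j|=\infty\}}$ dichotomy coordinate by coordinate in the limit of the penalty.
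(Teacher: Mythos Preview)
Your proposal is correct and matches the paper's approach: the paper does not give a written-out proof of this proposition but simply states that it ``essentially is Theorem~5 from \cite{knifu00} and can be proven in the same manner simply by adjusting for componentwise tuning,'' which is precisely the program you outline (pointwise limit of $Q_n$, including the $|t_j|=\infty$ case, followed by the convexity/argmin lemma). Your remark that the moving-parameter penalty limit splits into the $t_j\in\R$ and $|t_j|=\infty$ cases is the only bookkeeping beyond \cite{knifu00}, and you handle it correctly.
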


Note that the vector $t$ takes over the role of $n^{1/2}\beta$ in the
finite-sample version of the function, $Q_n$, where the cases of
$n^{1/2}\beta_j = \pm \infty$ are now included in the asymptotic setting.
Also, the assumption of $n^{1/2}\beta_n$ converging in $\Rquer^p$ is not a
restriction in the sense that, by compactness of $\Rquer^p$,
Proposition~\ref{prop:Vconserv} characterizes all accumulation points of
the distributions (with respect to weak convergence) corresponding to
completely arbitrary sequences of $\beta_n$.


Similarly to the finite-sample case, we define $\hat u$ to be the unique
minimizer of $Q$, and for $d \in \{-1,1\}^p$, we define $Q^d(u) = u'Cu - 2
W'u + 2\sum_{j=1}^p \lambda_j d_j u_j$ with unique minimizer $\hat u^d$. We
can then formulate an asymptotic version of Theorem~\ref{ther:infprob}.

\begin{theorem} \label{ther:infprobconserv}
If $M \subseteq \R^p$ satisfies Condition~\ref{cond:A} with $\bar C =
C$, then
$$
\inf_{t \in \Rquer^p} P_t \left(\hat u \in M \right) = \min_{d \in
\{-1, 1\}^p} P \left(\hat u^d \in M \right),
$$
where $\hat u^d \sim N(C^{-1}\Lambda d, \sigma^2 C^{-1})$.
\end{theorem}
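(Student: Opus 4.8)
The plan is to reduce everything to the finite-sample Theorem~\ref{ther:infprob}. The key observation is that the limiting objective $Q$ of \eqref{eq:Vconserv} has precisely the structure of the reparametrized finite-sample objective $Q_n$, with $C$, $W\sim N(0,\sigma^2C)$, $\lambda$ and $t$ taking the roles of $C_n$, $W_n$, $n^{-1/2}\lambda_n$ and $n^{1/2}\beta$. The only genuinely new feature is that components of $t$ may be $\pm\infty$, activating the linear terms $\sgn(t_j)u_j$; I would handle these first by an approximation argument that collapses the infimum over $\Rquer^p$ to one over $\R^p$.

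\smallskip
\noindent\textit{Step 1 (reduction to finite $t$).} I would show $\inf_{t\in\Rquer^p}P_t(\hat u\in M)=\inf_{t\in\R^p}P_t(\hat u\in M)$; only ``$\ge$'' is at issue. Fix $t\in\Rquer^p$, let $J=\{j:|t_j|=\infty\}$, and set $t^{(k)}\in\R^p$ equal to $t$ off $J$ and to $k\,\sgn(t_j)$ on $J$, so that $t^{(k)}\to t$. Holding a realization of $W$ fixed, the functions $Q(\,\cdot\,;t^{(k)})$ and $Q(\,\cdot\,;t)$ agree on any fixed closed ball $\bar B$ around the (interior, unique) minimizer $\hat u(t)$ once $k$ is large, because for $j\in J$ one has $|u_j+t^{(k)}_j|-|t^{(k)}_j|=\sgn(t_j)u_j$ as soon as $k$ exceeds the range of the $j$th coordinate over $\bar B$. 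Since on $\partial\bar B$ the common value strictly exceeds the value at $\hat u(t)$, strict convexity of $Q(\,\cdot\,;t^{(k)})$ prevents its minimizer from leaving $\bar B$, so $\hat u(t^{(k)})=\hat u(t)$ for all large $k$. Hence $\ind_{\{\hat u(t^{(k)})\in M\}}\to\ind_{\{\hat u(t)\in M\}}$ a.s.\ and, by bounded convergence, $P_{t^{(k)}}(\hat u\in M)\to P_t(\hat u\in M)$; as every $t^{(k)}$ lies in $\R^p$, the limit is $\ge\inf_{s\in\R^p}P_s(\hat u\in M)$.

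\smallskip
\noindent\textit{Step 2 (finite $t$ via Theorem~\ref{ther:infprob}).} I would introduce the auxiliary Gaussian linear model with $p$ observations, regressor matrix $X=\sqrt p\,C^{1/2}$ (so $X'X/p=C$), errors $\eps\sim N(0,\sigma^2 I_p)$, and tuning parameters $\lambda_{n,j}=\sqrt p\,\lambda_j$. Writing $\beta=t/\sqrt p$, one checks directly that the reparametrized objective $Q_n$ of this model equals the function in \eqref{eq:Vconserv} with all $t_j$ finite, that its scaled error $W_n=C^{1/2}\eps$ is $N(0,\sigma^2C)$, and that $\hat u_n^d\sim N(-C^{-1}\Lambda d,\sigma^2C^{-1})$. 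Since $M$ satisfies Condition~\ref{cond:A} with $\bar C=C$, which is the $C_n$ of this model, Theorem~\ref{ther:infprob} gives $\inf_{t\in\R^p}P_t(\hat u\in M)=\min_{d\in\{-1,1\}^p}P(\hat u_n^d\in M)$. Because $d\mapsto-d$ is a bijection of $\{-1,1\}^p$, this minimum is unchanged if one replaces the mean $-C^{-1}\Lambda d$ of $\hat u_n^d$ by $C^{-1}\Lambda d$, so it equals $\min_d P(\hat u^d\in M)$. Combining with Step~1 finishes the proof.

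\smallskip
The step needing the most care is the pathwise identity $\hat u(t^{(k)})=\hat u(t)$ for large $k$ in Step~1: one must rule out the minimizer of $Q(\,\cdot\,;t^{(k)})$ escaping the ball on which the two objectives coincide, which is where strict convexity (and coercivity, to know the minimizers exist) enters. Alternatively, one can dispense with the reduction and simply rerun the proof of Theorem~\ref{ther:infprob} with $Q$ in place of $Q_n$: coordinates with $|t_j|=\infty$ are innocuous, since there the objective is already of the form appearing in $Q^d$, so that the use of Condition~\ref{cond:A} for the ``$\ge$'' inequality and of limiting parameters $t\to\pm\infty\cdot d$ for the ``$\le$'' inequality carries over verbatim.
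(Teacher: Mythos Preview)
Your argument is correct, and it takes a genuinely different route from the paper. The paper does precisely what you sketch in your final paragraph: it reruns the finite-sample proof with $Q$ in place of $Q_n$, first proving an asymptotic analogue of Proposition~\ref{prop:infprobiota} (the ``$\geq$'' direction is verbatim the same after identifying $n^{1/2}\beta$ with $t$; the ``$\leq$'' direction becomes trivial because one may take $t^*=(\infty,\dots,\infty)'\in\bar\O^\iota$, for which $Q=Q^\iota$ exactly), and then transferring to the other orthants by the sign-flip argument. Your approach instead treats Theorem~\ref{ther:infprob} as a black box: Step~2 manufactures a finite-sample Gaussian model with $n=p$ observations whose reparametrized objective $Q_n$ coincides with $Q$ for $t\in\R^p$, and Step~1 supplies the missing reduction from $\Rquer^p$ to $\R^p$ via a pathwise argument and bounded convergence. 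The trade-off is that your route is more modular and avoids repeating any of the directional-derivative computations, at the price of the extra approximation step; the paper's route exploits the fact that the infimum is actually attained on $\Rquer^p\setminus\R^p$, making the lower bound a one-liner and rendering the reduction unnecessary.
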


Given this result we can, again, construct asymptotically valid confidence
sets for the parameter $\beta$ in the following way.

\begin{corollary} \label{cor:coverageconserv}
If $M \subseteq \R^p$ satisfies Condition~\ref{cond:A} with $\bar C =
C$ and $\min_{d \in \{-1,1\}^p} P\left(\hat u^d \in M \right) = 1 -
\alpha$, where $\hat u^d \sim N(C^{-1}\Lambda d,\sigma^2 C^{-1})$ then
$$
\liminf_{n \to \infty} \inf_{\beta \in \R^p} 
P \left(\beta \in \betaL - n^{-1/2}M\right) = 1 - \alpha.
$$
\end{corollary}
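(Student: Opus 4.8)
The plan is to deduce the statement from the moving-parameter limit law (Proposition~\ref{prop:Vconserv}) together with the asymptotic coverage identity (Theorem~\ref{ther:infprobconserv}), glued by a subsequence/compactness argument. First I would reformulate: since $P_\beta(\beta \in \betaL - n^{-1/2}M) = P_\beta(n^{1/2}(\betaL - \beta) \in M)$, it suffices to show $\liminf_{n} \inf_{\beta \in \R^p} P_\beta(n^{1/2}(\betaL-\beta) \in M) = 1-\alpha$, and this I would establish by proving a matching upper and lower bound.

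For the upper bound, let $d^* \in \{-1,1\}^p$ attain $\min_{d} P(\hat u^{d} \in M) = 1-\alpha$ and take the moving sequence $\beta^{(n)} = n^{-1/4} d^*$, so that $n^{1/2}\beta^{(n)} = n^{1/4} d^*$ converges in $\Rquer^p$ to the vector $t^*$ with $t^*_j = d^*_j \cdot \infty$. By Proposition~\ref{prop:Vconserv} the corresponding $Q$ reduces to $Q^{d^*}$, hence $n^{1/2}(\betaL - \beta^{(n)}) \dto \hat u^{d^*}$, a nondegenerate (so Lebesgue-absolutely-continuous) Gaussian. Provided $M$ is a continuity set for this law -- which, by absolute continuity, amounts to $\partial M$ being Lebesgue-null -- the portmanteau theorem yields $P_{\beta^{(n)}}(n^{1/2}(\betaL-\beta^{(n)}) \in M) \to P(\hat u^{d^*} \in M) = 1-\alpha$. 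Since $\inf_{\beta} P_\beta(n^{1/2}(\betaL-\beta) \in M) \le P_{\beta^{(n)}}(n^{1/2}(\betaL-\beta^{(n)}) \in M)$ for every $n$, it follows that $\limsup_{n} \inf_{\beta} P_\beta(n^{1/2}(\betaL-\beta) \in M) \le 1-\alpha$.

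For the lower bound I would argue by contradiction. Suppose $\liminf_{n} \inf_{\beta} P_\beta(n^{1/2}(\betaL-\beta) \in M) = c < 1-\alpha$, pick a subsequence $(n_k)$ along which this liminf is realized, and pick $\beta^{(n_k)}$ with $P_{\beta^{(n_k)}}(n_k^{1/2}(\betaL-\beta^{(n_k)}) \in M) \le \inf_{\beta} P_\beta(n_k^{1/2}(\betaL-\beta) \in M) + 1/k$, so that $P_{\beta^{(n_k)}}(\cdot) \to c$. By compactness of $\Rquer^p$, passing to a further subsequence we may assume $n_k^{1/2}\beta^{(n_k)} \to t \in \Rquer^p$; then Proposition~\ref{prop:Vconserv} gives $n_k^{1/2}(\betaL - \beta^{(n_k)}) \dto \hat u$, the minimizer of $Q$ corresponding to $t$. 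Applying portmanteau to the open set $\operatorname{int}(M)$ gives $\liminf_k P(n_k^{1/2}(\betaL-\beta^{(n_k)}) \in M) \ge P(\hat u \in \operatorname{int}(M))$, while Theorem~\ref{ther:infprobconserv} gives $P(\hat u \in M) \ge \inf_{t'} P_{t'}(\hat u \in M) = 1-\alpha$. If moreover $P(\hat u \in \partial M) = 0$, then $P(\hat u \in \operatorname{int}(M)) = P(\hat u \in M) \ge 1-\alpha$, forcing $c \ge 1-\alpha$ -- a contradiction. Hence $\liminf_{n} \inf_{\beta} P_\beta(n^{1/2}(\betaL-\beta) \in M) \ge 1-\alpha$, and combined with the upper bound this gives the claimed equality, after which the corollary follows via the corollary-to-Theorem\nobreakspace\ref{ther:infprobconserv} reformulation at the start.

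The hard part is the boundary control invoked twice above: one must show that whenever $M$ satisfies Condition~\ref{cond:A} with $\bar C = C$, each relevant limiting law assigns zero probability to $\partial M$. For the Gaussian laws $\hat u^{d}$ this reduces to $\partial M$ being Lebesgue-null, which I expect to extract from the polyhedral geometry of Condition~\ref{cond:A} (each $A^d_C(m)$ is a full-dimensional polyhedron and $M$ is a union of such sets over its own points). For the general limiting minimizer $\hat u$ there is the extra difficulty that its law carries singular mass on the finitely many coordinate hyperplanes $\{u : u_j = -t_j\}$ with $t_j \in \R$, so one additionally needs $\partial M$ to meet each such hyperplane in a set that is null for $(p-1)$-dimensional Lebesgue measure; this too I would deduce from the structure of Condition~\ref{cond:A} restricted to those hyperplanes. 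Everything else is routine weak-convergence bookkeeping.
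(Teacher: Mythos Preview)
Your strategy matches the paper's almost exactly: rewrite the coverage as $P_\beta(n^{1/2}(\betaL-\beta)\in M)$, pin the $\liminf$ by choosing near-minimizing $\beta_n$, pass to a subsequence so that $n^{1/2}\beta_n\to t\in\Rquer^p$ by compactness, apply Proposition~\ref{prop:Vconserv}, and then invoke Theorem~\ref{ther:infprobconserv} for the lower bound; for the upper bound exhibit a sequence with $n^{1/2}\beta_n\to(d^*_1\infty,\dots,d^*_p\infty)$. The only cosmetic difference is that the paper uses the constant sequence $\beta_n=d$ rather than your $n^{-1/4}d^*$.

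Where you diverge is in rigor, not in route: the paper simply writes $P_{\beta_n}(n^{1/2}(\betaL-\beta_n)\in M)\to P_t(\hat u\in M)$ without justification, while you correctly isolate the portmanteau step and observe that it requires $M$ to be a continuity set for the limiting law --- including the delicate case where $\hat u$ carries singular mass on the hyperplanes $\{u_j=-t_j\}$ for finite $t_j$. Your proposed resolution via the polyhedral geometry of Condition~\ref{cond:A} is plausible but is indeed additional work; the paper does not address this point at all, so in that respect your proposal is more careful than the published proof rather than less.
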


We find that asymptotically in the case of conservative tuning, we
essentially get the same results as in finite samples when assuming
normally distributed errors. The only difference is that the minimal
coverage holds asymptotically and that the quantities $C_n$ and
$n^{-1/2}\Lambda_n$ have settled to their limiting values $C$ and
$\Lambda$, respectively.

\subsection{Consistent tuning} \label{subsec:consist}

In the second regime and \emph{throughout this subsection}, we suppose
that
$$
\frac{\lambda_{n,j}}{n^{1/2}} \longto \infty
$$
\emph{for at least one} $j$ with $1 \leq j \leq p$ as well as
$$
\frac{1}{n} \lambda_{n, j} \longto 0
$$
\emph{for all} $j=1,\dots,p$ as $n \to \infty$, where the latter condition
ensures estimation consistency of the estimator. We refer to this regime as
\emph{consistent tuning} to highlight the contrast to conservative tuning
where $\lambda_{n,j}/n^{1/2}$ converges for each $j = 1,\dots,p$. Yet we
emphasize that in order to ensure $P_\beta(\hat\beta_{\L,j} = 0) \to 1$
whenever $\beta_j = 0$, we would need $\lambda_{n,j}/n^{1/2} \to \infty$
for each $j = 1,\dots,p$ as well as need additional conditions on the
regressor matrix $X$. We refer the reader to \cite{Zou06}, \cite{ZhaoYu06}
and \cite{YuanLin07b} for a discussion concerning necessary and sufficient
conditions on $X$ in this context.


In the case of consistent tuning, the rate of the estimator is no longer
$n^{-1/2}$, neither when looked at in a fixed-parameter asymptotic
framework (as has been noted by \cite{Zou06} in Lemma 3), nor (a fortiori)
within a moving-parameter asymptotic framework, as discussed in in
\cite{PoetscherLeeb09} in Theorem~2. The latter reference shows that the
correct (uniform) convergence rate depends on the sequence of tuning
parameters $\lambda_n$. Since we allow for componentwise tuning, in fact,
the rate depends on the largest component of the vector of tuning
parameters, as can be seen from the following proposition. We define
$$
\lambda^*_n = \max_{1 \leq j \leq p} \lambda_{n,j}
$$
and $\lambda_0 = (\lambda_{0,1},\dots,\lambda_{0,p})'$ by
$$
\lambda_{n,j}/\lambda_n^* \longto \lambda_{0,j}  \in [0,1]
$$
for each $j = 1,\dots,p$ as $n \to \infty$. Note that $\lambda_{0,j} =
1$ for all $j$ in case all components are equally tuned.

\begin{proposition} \label{prop:Vconsist} 
Assume that $n\beta_n/\lambda_n^* \to \zeta \in \Rquer^p$. Then
$n(\betaL - \beta)/\lambda^*_n \pto m = \argmin_{u \in \R^p}
V^\zeta(u)$, where
$$
V^\zeta(u) = u'Cu + 2\sum_{j=1}^p \lambda_{0,j} 
\left[\ind_{\{\zeta_j \in \R\}} (|u_j + \zeta_j| - |\zeta_j|) + 
\ind_{\{|\zeta_j| = \infty\}} \sgn(\zeta_j)u_j\right].
$$
\end{proposition}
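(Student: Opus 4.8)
\emph{Proof proposal.}
The plan is to recast the definition of $\betaL$ as the minimization of a suitably reparametrized and normalized \emph{convex} objective, to identify its pointwise-in-probability limit as $V^\zeta$, and then to invoke the standard epi-convergence/convexity argument for minimizers of convex stochastic processes, exactly as in the proof of Theorem~5 in \cite{knifu00} (see also Theorem~2 in \cite{poelee09}). Writing $\beta_n$ for the moving parameter, substitute $\upbeta = \beta_n + (\lambda_n^*/n)\,u$ into $L_n$; since $y = X\beta_n + \eps$, the reparametrized and rescaled objective
$$
\tilde V_n(u) \;=\; \frac{n}{(\lambda_n^*)^2}\bigl(L_n(\beta_n + (\lambda_n^*/n)u) - L_n(\beta_n)\bigr)
$$
is convex in $u$ and is uniquely minimized at $m_n := n(\betaL-\beta_n)/\lambda_n^*$, because the reparametrization is affine and multiplication by the positive constant $n/(\lambda_n^*)^2$ leaves the argmin unchanged. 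Expanding the squared norm and the penalty gives
$$
\tilde V_n(u) \;=\; u'C_n u \;-\; \frac{2\sqrt{n}}{\lambda_n^*}\,W_n' u \;+\; 2\sum_{j=1}^p \frac{\lambda_{n,j}}{\lambda_n^*}\Bigl(\bigl|\zeta_{n,j}+u_j\bigr| - \bigl|\zeta_{n,j}\bigr|\Bigr),
$$
where $C_n = X'X/n$, $W_n = n^{-1/2}X'\eps$ and $\zeta_{n,j} = n\beta_{n,j}/\lambda_n^*$.

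Next I would show that $\tilde V_n(u) \pto V^\zeta(u)$ for every fixed $u \in \R^p$. The first term converges deterministically since $C_n \to C$. For the cross term, $\E\|W_n\|^2 = \sigma^2\operatorname{tr} C_n$ is bounded, so $W_n = O_p(1)$, while $\sqrt{n}/\lambda_n^* \to 0$ because $\lambda_n^* \geq \lambda_{n,j_0}$ for some index $j_0$ with $\lambda_{n,j_0}/\sqrt{n}\to\infty$; hence the cross term tends to $0$ in probability. For the penalty term, $\lambda_{n,j}/\lambda_n^* \to \lambda_{0,j}$ and $\zeta_{n,j}\to\zeta_j$: if $\zeta_j\in\R$, then $|\zeta_{n,j}+u_j|-|\zeta_{n,j}|\to|\zeta_j+u_j|-|\zeta_j|$ by continuity; if $|\zeta_j|=\infty$, then for $n$ large both $\zeta_{n,j}$ and $\zeta_{n,j}+u_j$ have the common sign $\sgn(\zeta_j)$, so the difference equals $\sgn(\zeta_j)u_j$. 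Collecting the three limits yields precisely $V^\zeta(u)$.

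Finally I would pass from pointwise convergence of the convex functions $\tilde V_n$ to convergence of their minimizers. Since $C$ is positive definite, $u'Cu$ is strictly convex and the remaining terms of $V^\zeta$ are convex and bounded below by $-2\sum_j\lambda_{0,j}|u_j|$, so $V^\zeta$ is strictly convex and coercive and thus possesses a unique minimizer $m$. The convexity lemma for stochastic processes — pointwise convergence in probability of convex functions, together with a unique minimizer of the (deterministic) limit, implies convergence in probability of the argmins — then gives $m_n \pto m$, which is the claim. I expect the only points needing care to be the verification that $(\lambda_n^*)^2/n$ is the correct normalizing rate, which is exactly where the consistent-tuning condition $\lambda_n^*/\sqrt n \to \infty$ enters (it forces the stochastic cross term to be negligible and hence the limit to be deterministic), and the bookkeeping for the components with $|\zeta_j|=\infty$; the remaining steps are routine given the tools already employed in the paper.
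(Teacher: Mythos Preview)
Your proposal is correct and follows essentially the same approach as the paper: the paper reparametrizes by $u\mapsto\beta_n+(\lambda_n^*/n)u$, rescales by $n/(\lambda_n^*)^2$, establishes pointwise convergence in probability to $V^\zeta$ term by term exactly as you do, and then invokes the convexity argmin argument (citing \cite{gey96TR} rather than \cite{knifu00} or \cite{poelee09}). Your write-up is in fact slightly more explicit than the paper's about the $|\zeta_j|=\infty$ case and about strict convexity and coercivity of $V^\zeta$, but the substance is identical.
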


(In contrast to the finite-sample and the conservative case, we make
the dependence of the objective function $V^\zeta$ on the unknown
parameter $\zeta \in \Rquer^p$ apparent in the notation to clarify
what we do in the following). Proposition~\ref{prop:Vconsist} shows
that $\lambda^*_n/n$ is indeed the correct (uniform) convergence rate
as the limit of $n(\betaL - \beta)/\lambda^*_n$ is not 0 in general.
The proposition also reveals that in the consistently tuned case, when
scaled according the correct convergence rate, the limit of the
sequence of estimators is always non-random, a fact that in a
moving-parameter asymptotic framework has already been noted in the
one-dimensional case in \cite{PoetscherLeeb09}. This fact allows us to
construct very simple confidence sets in the case of consistent tuning
by first observing that the limit of $n(\betaL - \beta)/\lambda^*_n$
is always contained in a bounded set which is described in
Proposition~\ref{prop:setscriptM}. To this end, define the set
\begin{equation}
\label{eq:M}
\M = \bigcup_{\zeta \in \Rquer^p} \argmin_{u \in \R^p} V^\zeta(u)
\end{equation}
and note that the following can be shown.

\begin{proposition} \label{prop:setscriptM} 
The set $\M$ can be written as
$$
\left\{m \in \R^p: |(Cm)_j| \leq \lambda_{0,j}, 1 \leq j \leq p
\right\} = C^{-1} \left\{z \in \R^p: |z_j | \leq \lambda_{0,j}, 1 \leq
p \right\}.
$$
\end{proposition}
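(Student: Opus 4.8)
The plan is to characterize each $m = \argmin_{u \in \R^p} V^\zeta(u)$ by a first-order condition and then read off the two inclusions. For fixed $\zeta \in \Rquer^p$, the map $V^\zeta$ is strictly convex (its quadratic part $u'Cu$ is, since $C$ is positive definite) and coercive, so it has a unique minimizer $m$, characterized by $0 \in \partial V^\zeta(m)$. Since the penalty part of $V^\zeta$ is separable in the coordinates $u_1,\dots,u_p$, this inclusion splits into $p$ scalar conditions; using $\nabla(u'Cu) = 2Cu$ and the standard subdifferential of the absolute value, the condition at coordinate $j$ reads $(Cm)_j + \lambda_{0,j} s_j = 0$ for some scalar $s_j$, where $s_j = \sgn(m_j + \zeta_j)$ if $\zeta_j \in \R$ and $m_j + \zeta_j \neq 0$, $s_j \in [-1,1]$ if $\zeta_j \in \R$ and $m_j + \zeta_j = 0$, and $s_j = \sgn(\zeta_j)$ if $|\zeta_j| = \infty$. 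In all three regimes $|s_j| \leq 1$, hence $|(Cm)_j| = \lambda_{0,j}|s_j| \leq \lambda_{0,j}$. This gives the inclusion $\M \subseteq \{m \in \R^p : |(Cm)_j| \leq \lambda_{0,j},\ 1 \leq j \leq p\}$.

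For the reverse inclusion I would take an arbitrary $m$ with $|(Cm)_j| \leq \lambda_{0,j}$ for all $j$ and exhibit a $\zeta \in \Rquer^p$ for which the above optimality condition holds at $u = m$; by convexity this forces $m = \argmin_u V^\zeta(u)$, so $m \in \M$. The construction is coordinatewise. If $\lambda_{0,j} = 0$, then $(Cm)_j = 0$ and coordinate $j$ contributes nothing to the penalty, so any choice, say $\zeta_j = 0$, works. If $\lambda_{0,j} > 0$, set $s_j := -(Cm)_j/\lambda_{0,j} \in [-1,1]$. When $|s_j| < 1$, take $\zeta_j = -m_j$ finite, so that $m_j + \zeta_j = 0$ and $s_j$ is an admissible element of $\partial|\cdot|(0) = [-1,1]$; when $s_j = 1$ (resp.\ $s_j = -1$) take $\zeta_j = +\infty$ (resp.\ $-\infty$), so that $\sgn(\zeta_j) = s_j$. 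In each case $(Cm)_j + \lambda_{0,j} s_j = 0$, hence $0 \in \partial V^\zeta(m)$ and $m \in \M$. The remaining equality in the statement is merely the change of variables $z = Cm$: since $C$ is invertible, $\{m : |(Cm)_j| \leq \lambda_{0,j},\ 1 \leq j \leq p\} = C^{-1}\{z : |z_j| \leq \lambda_{0,j},\ 1 \leq j \leq p\}$.

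The only delicate point I anticipate is the careful bookkeeping of $\partial V^\zeta$ across the three regimes for $\zeta_j$ (finite with $m_j + \zeta_j \neq 0$, finite with $m_j + \zeta_j = 0$, and $|\zeta_j| = \infty$), together with the fact that the first-order condition is here both necessary \emph{and} sufficient for optimality, which is where convexity is used. Once that is in place, both inclusions and the change of variables are routine.
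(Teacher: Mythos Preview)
Your argument is correct and rests on the same idea as the paper's: characterize the minimizer of the strictly convex $V^\zeta$ by a first-order condition and read off the two inclusions. The paper phrases this via directional derivatives rather than subdifferentials, but these are equivalent here.

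The one place where the paper is a bit cleaner is the reverse inclusion. You split into cases ($\lambda_{0,j}=0$; $|s_j|<1$; $s_j=\pm 1$) and build $\zeta$ coordinatewise, sometimes taking $\zeta_j=\pm\infty$. The paper simply sets $\zeta=-m\in\R^p$ for \emph{all} coordinates at once: then $m_j+\zeta_j=0$ for every $j$, so the $j$-th penalty contributes $\lambda_{0,j}|r_j|$ to the directional derivative in direction $r$, and
\[
\pd{V^\zeta(m)}{r}=2r'Cm+2\sum_{j=1}^p\lambda_{0,j}|r_j|\geq 2\sum_{j=1}^p\big(\lambda_{0,j}-|(Cm)_j|\big)|r_j|\geq 0
\]
for every $r\neq 0$, which by convexity forces $m=\argmin V^\zeta$. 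In subdifferential language this is just the observation that $\partial|\cdot|(0)=[-1,1]$ already contains $\pm 1$, so your separate treatment of $s_j=\pm 1$ via $\zeta_j=\pm\infty$ is unnecessary. Either way the proof goes through; the paper's uniform choice of $\zeta$ just avoids the bookkeeping you flagged as the ``delicate point.''
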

Thus $\M$ can be viewed as a box distorted by the linear function
$C^{-1}$, a bounded set in $\R^p$. In fact, this turns out to be a
parallelogram whose corner points are given by the set $\{C^{-1}
\Lambda_0 d : d \in \{-1,1\}^p \}$, where $\Lambda_0 =
\diag(\lambda_0)$. Note that fittingly, these corner points can be
viewed as the equivalent of the means in the normal distributions
(determining the minimal coverage probability) in the conservative
case in Theorem~\ref{ther:infprobconserv}, appearing without
randomness in the limit in the consistently tuned case. Using
Proposition~\ref{prop:setscriptM}, a simple asymptotic confidence set
can now be constructed as is done in the following corollary.

\begin{corollary} \label{cor:coverageconsist}
We have
$$
\lim_{n \to \infty} \inf_{\beta \in \R^p} 
P_\beta\left(\beta \in \betaL - d\frac{\lambda_n^*}{n} \M\right) = 1
$$
for any $d > 1$ and
$$
\lim_{n \to \infty} \inf_{\beta \in \R^p} 
P_\beta\left(\beta \in \betaL - d\frac{\lambda_n^*}{n} \M\right) = 0
$$
for any $d < 1$.
\end{corollary}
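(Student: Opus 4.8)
The plan is to reduce Corollary~\ref{cor:coverageconsist} to the convergence result in Proposition~\ref{prop:Vconsist} together with the explicit description of $\M$ in Proposition~\ref{prop:setscriptM}. The key observation is the translation identity $P_\beta(\beta \in \betaL - d\tfrac{\lambda_n^*}{n}\M) = P_\beta\bigl(\tfrac{n}{\lambda_n^*}(\betaL - \beta) \in d\,\M\bigr)$, valid because $\M$ is symmetric about the origin (so $-d\M = d\M$ for $d>0$); this recasts the problem entirely in terms of the scaled estimation error $m_n := \tfrac{n}{\lambda_n^*}(\betaL - \beta)$. I would then handle the two cases $d>1$ and $d<1$ by exploiting compactness of $\Rquer^p$ exactly as is done for the conservative case: for every sequence $\beta_n$ there is a subsequence along which $n\beta_n/\lambda_n^* \to \zeta \in \Rquer^p$, and along such a subsequence Proposition~\ref{prop:Vconsist} gives $m_n \pto m(\zeta) := \argmin_u V^\zeta(u) \in \M$.

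For the first statement ($d>1$): since $m(\zeta)\in\M$ and $\M$ is the closed, bounded set $\{z: |(Cz)_j|\le \lambda_{0,j}\}$, the point $m(\zeta)$ lies in the interior of the strictly larger set $d\,\M = \{z: |(Cz)_j| \le d\,\lambda_{0,j}\}$ — here one needs $d>1$ so that the dilated box strictly contains the original one, with a uniform margin $\ge (d-1)\min_j \lambda_{0,j}$... except care is needed if some $\lambda_{0,j}=0$. In that degenerate coordinate $(Cm)_j$ is forced to equal $0$ for every point of $\M$ and of $d\M$ alike, so that coordinate imposes no gap but also no obstruction; the relevant ``distance to the boundary of $d\M$'' is governed only by the coordinates with $\lambda_{0,j}>0$, and at least one such coordinate exists by the consistent-tuning assumption. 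Because convergence in probability of $m_n$ to the constant $m(\zeta)$ implies $P(m_n \in d\,\M) \to 1$ whenever $m(\zeta)$ is at positive distance from $\R^p\setminus d\M$ in the relevant coordinates, and this distance is bounded below uniformly over $\zeta\in\Rquer^p$ (the map $\zeta \mapsto m(\zeta)$ ranges only over $\M$), a standard subsequence argument upgrades the pointwise-in-$\zeta$ statement to $\inf_{\beta} P_\beta(m_n \in d\M) \to 1$.

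For the second statement ($d<1$): now $d\,\M$ is a strict sub-box of $\M$, and I would show the infimum over $\beta$ of $P_\beta(m_n \in d\M)$ tends to $0$ by exhibiting, for each $n$, a parameter value $\beta_n$ for which $m_n$ concentrates near a point of $\M$ lying outside $d\M$. Concretely, take $\zeta = \zeta(d) \in \Rquer^p$ with $\sgn(\zeta_j) = e_j$ and $|\zeta_j| = \infty$ for a suitable sign pattern $e\in\{-1,1\}^p$; then $V^\zeta(u) = u'Cu + 2\sum_j \lambda_{0,j} e_j u_j$ is minimized at $m(\zeta) = -C^{-1}\Lambda_0 e$, a corner of $\M$, which satisfies $|(Cm(\zeta))_j| = \lambda_{0,j}$ and hence lies outside $d\M$ in every coordinate with $\lambda_{0,j}>0$. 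Choosing $\beta_n$ so that $n\beta_n/\lambda_n^* \to \zeta$ (e.g. $\beta_{n,j} = e_j \cdot \lambda_n^*/\sqrt{n}$ for the infinite coordinates, $0$ otherwise), Proposition~\ref{prop:Vconsist} yields $m_n \pto m(\zeta) \notin d\M$, so $P_{\beta_n}(m_n \in d\M) \to 0$, which forces the infimum to $0$.

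The main obstacle I anticipate is the careful bookkeeping in the $d>1$ case when some tuning-parameter components satisfy $\lambda_{0,j}=0$: one must argue that such coordinates contribute neither slack nor obstruction, and phrase the ``uniform interior margin'' solely in terms of the nonzero components, using that the consistent-tuning hypothesis guarantees $\lambda^*_n = \max_j \lambda_{n,j}$ is itself one of those components (so $\lambda_{0,j}=1>0$ for the maximizing index $j$). A secondary technical point is justifying the uniform-in-$\zeta$ conclusion rigorously: this is the same compactness-of-$\Rquer^p$ plus subsequence argument already invoked in the discussion following Proposition~\ref{prop:Vconserv}, but one should spell out that the limiting point $m(\zeta)$ always lies in the fixed compact set $\M$, so the relevant distances are bounded away from zero uniformly, and that $V^\zeta$ depends continuously enough on $\zeta$ for the argmin to behave well under the subsequential limits.
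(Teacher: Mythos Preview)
Your overall strategy matches the paper's: both rewrite the coverage event as $\{m_n \in d\M\}$ with $m_n = n(\betaL-\beta)/\lambda_n^*$, use compactness of $\Rquer^p$ together with Proposition~\ref{prop:Vconsist} to handle an arbitrary minimizing sequence $\beta_n$ in the case $d>1$, and exhibit a single bad sequence $\beta_n$ in the case $d<1$. The only difference is in the $d<1$ construction: the paper takes the \emph{finite} value $\zeta = -C^{-1}\lambda_0$ (so that, by the second half of the proof of Proposition~\ref{prop:setscriptM}, $\argmin_u V^\zeta(u) = C^{-1}\lambda_0$) and then sets $\beta_n = \lambda_n^*\zeta/n$, whereas you send the components of $\zeta$ to $\pm\infty$. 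Both choices land at a corner of $\M\setminus d\M$ and work equally well; the paper's choice is marginally more economical because it recycles an argument already written down.

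Your flagged concern about coordinates with $\lambda_{0,j}=0$, however, is not mere bookkeeping, and your proposed resolution (``such coordinates contribute neither slack nor obstruction'') does not work. When $\lambda_{0,j}=0$ for some $j$, the set $d\M=\{z:|(Cz)_j|\le d\lambda_{0,j}\}$ is contained in the hyperplane $\{z:(Cz)_j=0\}$ and therefore has empty interior in $\R^p$; no point $m(\zeta)\in\M$ is then an interior point of $d\M$, and the implication ``$m_n\pto m(\zeta)\in\M$, hence $P(m_n\in d\M)\to 1$'' breaks down. Concretely, with $p=2$, $C=I_2$, $\lambda_{n,1}=n^{3/4}$, $\lambda_{n,2}=n^{1/4}$ one gets $\lambda_{0,2}=0$, the confidence set $\betaL - d(\lambda_n^*/n)\M$ is a line segment with fixed second coordinate $\hat\beta_{\L,2}$, and for any $\beta$ with $\beta_2\neq 0$ the coverage probability is zero. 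The paper's own proof does not address this either (it simply writes ``Proposition~\ref{prop:setscriptM} implies that $c=1$''), so the cleanest repair is to add the hypothesis $\lambda_{0,j}>0$ for all $j$; under that assumption $\M$ lies in the interior of $d\M$ for every $d>1$, and both your argument and the paper's go through without further comment.
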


Note that nothing can be said about the boundary case $d=1$. This
corollary is a generalization of the simple confidence interval given
in Proposition~6 in \cite{PoetscherSchneider10}. The shape of $\M$ is
depicted in Figure~\ref{fig:Mconsist}. Finally, also note the set $\M$
is not required to satisfy Condition~\ref{cond:A} and, in fact, will
not comply with this condition for certain matrices $C$.

\begin{figure}
\centering
\includegraphics[width=0.48\textwidth]{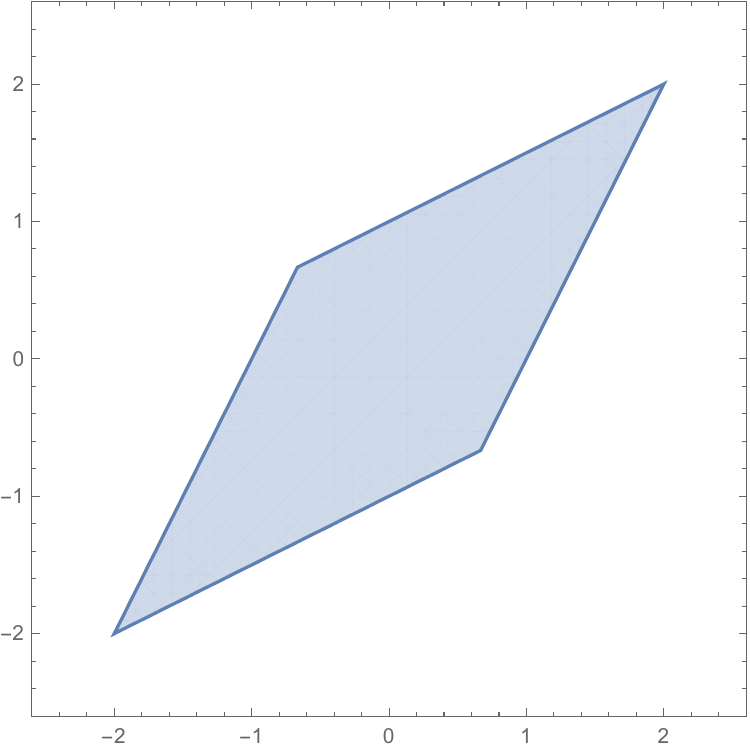}
\caption{\label{fig:Mconsist} The set $\M$ for $C = \left(\protect
\begin{smallmatrix} 1 & -0.5 \\ -0.5 & 1 \protect\end{smallmatrix}\right)$ 
and $\lambda_0 = (1,1)'$.}
\end{figure}

\section{Summary and conclusion} 
\label{sec:conclusion}

We consider confidence regions based on the Lasso estimator covering the
entire unknown parameter vector, thereby quantifying estimation uncertainty
of this estimator. We provide exact formulas for the minimal coverage
probability of these regions in finite samples and asymptotically in a
low-dimensional framework when the estimator is tuned to perform
conservative model selection. We do this without explicit knowledge of the
distribution but by carefully exploiting the structure of the optimization
problem that defines the estimator. The sets we consider as confidence
regions need to satisfy certain shape constraints which apply to the
regular confidence ellipse based on the LS estimator. We show that the LS
confidence ellipse is always smaller than the one based on the Lasso
estimator, but not contained in the Lasso ellipse in general. An ellipse is
not the optimal shape for the confidence region based on the Lasso
estimator in terms of volume. We give some guidelines on how to construct
regions of smaller volume. We show how a set can be minimally enlarged in
order to comply with the imposed shape condition, allowing to start the
construction with sets of arbitrary shapes.  We also illustrate how the
coverage probability of the Lasso ellipse varies over the parameter space
for the case when $p=2$, in which we also show how our results can be used
for constructing valid confidence intervals for single components of the
parameter space. In case the error variance needs to be estimated, our
results involve non-central $t$-distributions rather than shifted normal
distributions. Finally, in the consistently tuned case, we give a simple
asymptotic confidence regions in the shape of a parallelogram that is
determined by the regressor matrix.

\begin{appendix}
\section{Proofs} \label{sec:proofs}

We start the proof section with introducing some notation that will be used
throughout this section. Let $e_j$ denote the $j^\th$ unit vector in $\R^p$
and let $\iota = (1,\dots,1)' \in \R^p$. For a vector $d \in \{-1,1\}^p$,
we define $\O^d$ to be the corresponding orthant of $\R^p$, that is, $\O^d
= \{z \in \R^p :  d_jz_j \geq 0\}$ and $\bar\O^d$ to be the corresponding
orthant of $\Rquer^p$, that is, $\bar\O^d = \{z \in \Rquer^p : d_jz_j \geq
0\}$. By $\O^\iota_\Int$ we denote the orthant with strictly positive
components only, that is, $\O^\iota_{\Int} = \{z \in \R^p : z_j > 0\}$. The
sup-norm on $\R^p$ is denoted by $\|.\|_\infty$.


To remind the reader of some notation relevant for the following proofs
that was introduced previously throughout the paper, note that $\hat u_n =
n^{1/2}(\betaL - \beta)$, where $\hat u_n$ is the minimizer of $Q_n$, and
$\uLS = n^{1/2}(\betaLS - \beta)$. The minimizer of $Q_n^d$ was labeled
$\hat u_n^d$. The asymptotic versions in the conservatively tuned case were
labeled $\hat u$ and $Q$, as well as $\hat u^d$ and $Q^d$, respectively.


The directional derivative of a function $g:\R^p \to \R$ at $u$ in the
direction of $r \in \R^p\setminus{\{0\}}$ is defined as
$$
\pd{g(u)}{r} = \lim_{h \searrow 0} \frac{g(u + hr) - g(u)}{h}.
$$

\subsection{Proofs for Section~\ref{sec:finitesample}} \label{sec:proofsfs}

In order to prove the main theorem, we start by re-writing
Condition~\ref{cond:A}. For $m \in \R^p$ and a $p \times p$ matrix
$\bar C$, we define
\begin{align*}
A^{d_j}_{\bar C,j}(m) & \; = \; \{z \in \R^p: d_j(\bar Cm)_j \leq d_j(\bar Cz)_j,
d_jz_j \leq 0\} \text{ and } \\ 
B^{d_j}_{\bar C,j}(m) & \; = \; \{z \in \R^p: (\bar Cz)_j = (\bar Cm)_j, d_jz_j > 0\}
\end{align*}
for $j = 1,\dots,p$. Note that clearly we have
$$
A^d_{\bar C}(m) = \bigcap_{j=1}^p A^{d_j}_{\bar C,j}(m),
$$
and that, in fact, also the following lemma holds.


\begin{lemma} \label{lem:condA}
$$
\bigcup_{d \in \{-1,1\}^p} \; \bigcap_{j=1}^p A^{d_j}_{\bar C,j}(m) = 
\bigcup_{d \in \{-1,1\}^p} \;
\bigcap_{j=1}^p A^{d_j}_{\bar C,j}(m) \cup B^{d_j}_{\bar C,j}(m)
$$
\end{lemma}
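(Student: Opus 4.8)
The plan is to establish the two inclusions separately, with the forward one being immediate and the reverse one requiring a coordinatewise sign-flipping argument. For ``$\subseteq$'', I would simply note that for every $j$ and every sign $d_j \in \{-1,1\}$ we trivially have $A^{d_j}_{\bar C,j}(m) \subseteq A^{d_j}_{\bar C,j}(m) \cup B^{d_j}_{\bar C,j}(m)$, and that intersecting over $j = 1,\dots,p$ and then taking the union over $d \in \{-1,1\}^p$ preserves this inclusion.

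For the reverse inclusion ``$\supseteq$'', the key observation is that for a fixed sign $d_j$ the sets $A^{d_j}_{\bar C,j}(m)$ and $B^{d_j}_{\bar C,j}(m)$ are disjoint, since the former imposes $d_j z_j \leq 0$ while the latter imposes $d_j z_j > 0$. I would fix $z$ in the right-hand side, so that $z \in \bigcap_{j=1}^p \big(A^{d_j}_{\bar C,j}(m) \cup B^{d_j}_{\bar C,j}(m)\big)$ for some $d \in \{-1,1\}^p$, and define a new sign vector $d' \in \{-1,1\}^p$ coordinatewise: set $d'_j = d_j$ whenever $z \in A^{d_j}_{\bar C,j}(m)$, and $d'_j = -d_j$ whenever $z \in B^{d_j}_{\bar C,j}(m)$. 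By the disjointness just noted and since $z$ lies in the union, exactly one of these cases applies for each $j$, so $d'$ is well defined.

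It then remains to check that $z \in A^{d'_j}_{\bar C,j}(m)$ for every $j$. In the first case this holds by construction. In the second case, $(\bar Cz)_j = (\bar Cm)_j$, so the half-space inequality $d'_j(\bar Cm)_j \leq d'_j(\bar Cz)_j$ holds with equality regardless of the sign $d'_j$; moreover $d_j z_j > 0$ forces $z_j \neq 0$ and hence $d'_j z_j = -d_j z_j < 0 \leq 0$, so the orthant constraint is met as well. Thus $z \in \bigcap_{j=1}^p A^{d'_j}_{\bar C,j}(m)$, which is contained in the left-hand side, and the proof is complete.

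Since the argument reduces to a short coordinatewise verification, I do not expect a substantial obstacle; the only point requiring a moment of care is confirming that the case split into ``$z$ satisfies the $A$-constraint at coordinate $j$'' versus ``$z$ satisfies the $B$-constraint at coordinate $j$'' is both exhaustive (because $z$ is assumed in the union) and mutually exclusive (by the sign condition), so that the sign flip defining $d'$ is unambiguous.
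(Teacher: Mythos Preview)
Your proof is correct and follows essentially the same approach as the paper: both argue the nontrivial inclusion by taking $z$ in the right-hand side, defining a new sign vector coordinatewise (your $d'$, the paper's $f$) by keeping $d_j$ when $z \in A^{d_j}_{\bar C,j}(m)$ and flipping it when $z \in B^{d_j}_{\bar C,j}(m)$, and then checking $z \in A^{d'_j}_{\bar C,j}(m)$ for all $j$. Your version is slightly more explicit in noting the disjointness of $A^{d_j}_{\bar C,j}(m)$ and $B^{d_j}_{\bar C,j}(m)$ (so that $d'$ is well defined) and in verifying the two constraints in the flipped case, but the idea is identical.
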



\begin{proof}
We fix $m$ and $\bar C$, drop the corresponding subscripts and show
that the set on the left-hand side of the equation contains the set on
the right-hand side of the equation. To this end, take any $z$ from
the set on right-hand side. Then there exists a $d \in \{-1,1\}^d$
such that for each $j=1,\dots,p$, $z$ is either contained in $A^{d_j}_j$
or in $B^{d_j}_j$. We pick $f \in \{-1,1\}^p$ in the following way: if $z
\in A^{d_j}_j$, set $f_j = d_j$ and if $z \in B^{d_j}_j$, set $f_j = -d_j$.
Then, by construction, $z \in A^f_j$ for all $j=1,\dots,p$ and
therefore $z \in \bigcap_j A^f_j$ so that $z$ is contained in the set
on the left-hand side of the equation.
\end{proof}


Since needed later on, we also prove the following proposition which
quantifies the maximal distance between the Lasso and the LS estimator
in finite samples.

\begin{proposition} \label{prop:lsdifflasso} 
For each $j = 1,\dots,p$, we have
$$
\left|(X'X(\betaL - \betaLS))_j\right| \leq \lambda_{n,j},
$$
or, equivalently,
$$
\label{eq:lsdifflassou}
\left|\left(C_n(\hat u_n - \uLS)\right)_j\right| \leq n^{-1/2}\lambda_{n,j},
$$
where $\uLS = n^{1/2}(\betaLS - \beta)$.
\end{proposition}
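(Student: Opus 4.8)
The plan is to read off the conclusion directly from the first-order (KKT) conditions of the two optimization problems. Since $L_n$ is convex and $\betaL$ is its unique minimizer, $0$ lies in the subdifferential $\partial L_n(\betaL)$, which yields
$$
X'(y - X\betaL) = \Lambda_n s,
$$
where $s \in \R^p$ is a subgradient of the map $\upbeta \mapsto \sum_{j=1}^p |\upbeta_j|$ evaluated at $\betaL$; concretely $s_j = \sgn(\betaL_{j})$ whenever $\betaL_{j} \neq 0$ and $s_j \in [-1,1]$ otherwise, so that $\|s\|_\infty \leq 1$ in all cases. For the least-squares estimator, the normal equations give $X'(y - X\betaLS) = 0$.

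Next I would subtract the two identities to eliminate the data vector $y$, obtaining
$$
X'X(\betaL - \betaLS) = -\Lambda_n s.
$$
Taking the $j$-th component and using $\Lambda_n = \diag(\lambda_{n,1},\dots,\lambda_{n,p})$ together with $|s_j| \leq 1$ gives
$$
\left|(X'X(\betaL - \betaLS))_j\right| = \lambda_{n,j}\,|s_j| \leq \lambda_{n,j},
$$
which is the first assertion. For the equivalent reformulation I would just rescale: since $\hat u_n - \uLS = n^{1/2}(\betaL - \betaLS)$, we have $C_n(\hat u_n - \uLS) = (X'X/n)\,n^{1/2}(\betaL - \betaLS) = n^{-1/2}X'X(\betaL - \betaLS)$, so the displayed bound immediately yields $\left|(C_n(\hat u_n - \uLS))_j\right| \leq n^{-1/2}\lambda_{n,j}$.

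There is no substantive obstacle here; the only point deserving (mild) care is to state the subgradient characterization of the $\ell_1$-penalty correctly and to observe that the components of the subgradient vector are bounded by $1$ in absolute value irrespective of whether the corresponding coordinate of $\betaL$ vanishes, which is exactly what makes the bound uniform over $j$.
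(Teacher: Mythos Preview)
Your proof is correct and takes essentially the same approach as the paper: both derive the bound from first-order optimality conditions at the Lasso minimizer combined with the normal equations for $\betaLS$. The only cosmetic difference is that the paper works with the reparametrized objective $Q_n$ and phrases optimality via directional derivatives in the coordinate directions $\pm e_j$ (bounding the indicator terms by $1$), whereas you work with $L_n$ and invoke the subdifferential of the $\ell_1$ penalty directly (bounding $|s_j|$ by $1$); these are equivalent formulations of the same KKT argument.
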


\begin{proof}
The two inequalities above just differ by a scaling factor. We show
the latter one. We have $W_n = n^{-1/2}X'\eps = C_n\uLS$. Consider the
directional derivative  of $Q_n$ at its minimizer $\hat u_n$ in the
direction of $e_j$ and $-e_j$. We have
\begin{align*}
0 \; \leq \; \pd{}{e_j}Q_n(\hat u_n) & \; = \; 2(C_n\hat u_n)_j - 2W_{n,j} + 
2 n^{-1/2} \lambda_{n,j} \left[\ind_{\{\hat u_j \geq - n^{1/2}\beta_j\}} 
 - \ind_{\{\hat u_j < - n^{1/2}\beta_j\}}\right] \\
& \; \leq \; 2(C_n\hat u_n)_j - 2(C_n\uLS)_j + 2n^{-1/2}\lambda_{n,j}, 
\end{align*} 
as well as
\begin{align*}
0 \; \leq \; \pd{}{(-e_j)}Q_n(\hat u_n) & \; = \; -2(C_n\hat u_n)_j + 2W_{n,j} + 
2n^{-1/2}\lambda_{n,j} \left[\ind_{\{\hat u_j \leq - n^{1/2}\beta_j\}} 
 - \ind_{\{\hat u_j > - n^{1/2}\beta_j\}}\right] \\
& \; \leq \; -2(C_n\hat u_n)_j + 2(C_n\uLS)_j + 2n^{-1/2}\lambda_{n,j}.
\end{align*} 
Piecing the two displays above together yields the second inequality
in the proposition.
\end{proof}


To proceed note that $Q_n^d$ as defined in (\ref{eq:Qnd}) is a simple
quadratic and strictly convex function in $u$ with unique minimizer
$\hat u_n^d$ given by
\begin{equation} \label{eq:und}
\hat u_n^d = C_n^{-1} (W_n  - n^{-1/2}\Lambda_n d),
\end{equation}
where $W_n \sim N(0,\sigma^2 C_n)$. We first show
Theorem~\ref{ther:infprob} for one orthant of the parameter space
$\R^p$, as is formulated in Proposition~\ref{prop:infprobiota}.


\begin{proposition} \label{prop:infprobiota}
If $M_n \subseteq \R^p$ satisfies that 
$$
\bigcap_{j=1}^p A^{\iota_j}_{C_n,j}(m) \cup B^{\iota_j}_{C_n,j}(m) \subseteq M_n
$$
for all $m \in M_n$, then
$$
\inf_{\beta \in \O^\iota}
P_\beta(\hat u_n \in M_n) = P(\hat u_n^{\iota} \in M_n).
$$
\end{proposition}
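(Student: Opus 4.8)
The plan is to derive the identity from two matching inequalities, the substantive direction being
$$
\inf_{\beta \in \O^\iota} P_\beta(\hat u_n \in M_n) \;\ge\; P(\hat u_n^\iota \in M_n),
$$
which I would obtain not by a probabilistic argument but through a realization-wise set inclusion: for \emph{every} $\beta \in \O^\iota$ and every value of $W_n$, the implication $\hat u_n^\iota \in M_n \Rightarrow \hat u_n \in M_n$ holds. Since by (\ref{eq:und}) the vector $\hat u_n^\iota = C_n^{-1}(W_n - n^{-1/2}\Lambda_n \iota)$ depends only on $\eps$ and not on $\beta$, with the fixed law $N(-n^{-1/2}C_n^{-1}\Lambda_n \iota, \sigma^2 C_n^{-1})$, this inclusion immediately gives $P_\beta(\hat u_n \in M_n) \ge P_\beta(\hat u_n^\iota \in M_n) = P(\hat u_n^\iota \in M_n)$ for all $\beta \in \O^\iota$.

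To prove the inclusion, fix $\beta \in \O^\iota$ and a realization with $\hat u_n^\iota \in M_n$, and set $m := \hat u_n^\iota$. By the hypothesis imposed on $M_n$ it suffices to show $\hat u_n \in \bigcap_{j=1}^p \big(A^{\iota_j}_{C_n,j}(m) \cup B^{\iota_j}_{C_n,j}(m)\big)$. Fix $j$. Writing the directional derivatives of $Q_n$ out exactly as in the proof of Proposition~\ref{prop:lsdifflasso}, the optimality conditions $\pd{}{e_j} Q_n(\hat u_n) \ge 0$ and $\pd{}{(-e_j)} Q_n(\hat u_n) \ge 0$ yield, using $\lambda_{n,j} \ge 0$, that $(C_n \hat u_n)_j \ge W_{n,j} - n^{-1/2}\lambda_{n,j} = (C_n m)_j$ in every case, and that moreover $(C_n \hat u_n)_j = (C_n m)_j$ whenever $\hat u_{n,j} + n^{1/2}\beta_j > 0$. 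Because $\beta \in \O^\iota$ forces $\beta_j \ge 0$, the case $\hat u_{n,j} > 0$ entails $\hat u_{n,j} + n^{1/2}\beta_j > 0$, hence $(C_n \hat u_n)_j = (C_n m)_j$, i.e.\ $\hat u_n \in B^{\iota_j}_{C_n,j}(m)$; and the case $\hat u_{n,j} \le 0$, combined with $(C_n m)_j \le (C_n \hat u_n)_j$, gives $\hat u_n \in A^{\iota_j}_{C_n,j}(m)$. As this holds for every $j$, the inclusion follows and therefore $\hat u_n \in M_n$.

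For the reverse inequality I would take any sequence $\beta^{(k)} \in \O^\iota$ with all components tending to $+\infty$ and show $P_{\beta^{(k)}}(\hat u_n = \hat u_n^\iota) \to 1$. The point is that on the set $\{u \in \R^p : u_j + n^{1/2}\beta_j \ge 0 \text{ for all } j\}$ one has $Q_n(u) = Q_n^\iota(u)$ (again using $\beta_j \ge 0$), whereas $Q_n(u) \ge Q_n^\iota(u)$ for all $u \in \R^p$; hence, whenever $\hat u_n^\iota$ lies in that set it also minimizes $Q_n$ and so equals $\hat u_n$ by strict convexity. Since the law of $\hat u_n^\iota$ does not depend on $\beta$, we have $P_{\beta^{(k)}}\big(\hat u_{n,j}^\iota \ge -n^{1/2}\beta_j^{(k)} \text{ for all } j\big) \to 1$, so $|P_{\beta^{(k)}}(\hat u_n \in M_n) - P(\hat u_n^\iota \in M_n)| \le P_{\beta^{(k)}}(\hat u_n \ne \hat u_n^\iota) \to 0$, whence $\inf_{\beta \in \O^\iota} P_\beta(\hat u_n \in M_n) \le P(\hat u_n^\iota \in M_n)$. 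Combining the two inequalities proves the proposition. The main obstacle I anticipate is the realization-wise inclusion of the second paragraph: one must read off from the directional-derivative optimality conditions both that $(C_n \hat u_n)_j - (C_n \hat u_n^\iota)_j \ge 0$ always and that this difference vanishes precisely on the region that matters, and then match these facts against the definitions of $A^{\iota_j}_{C_n,j}$ and $B^{\iota_j}_{C_n,j}$, carefully handling the boundary case $\hat u_{n,j} + n^{1/2}\beta_j = 0$ and pinpointing where the assumption $\beta \in \O^\iota$ is used; the remaining steps are routine.
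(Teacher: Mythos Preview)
Your proof is correct and for the direction $\inf_{\beta\in\O^\iota}P_\beta(\hat u_n\in M_n)\ge P(\hat u_n^\iota\in M_n)$ it is essentially identical to the paper's: both establish the realization-wise implication $\hat u_n^\iota\in M_n\Rightarrow\hat u_n\in M_n$ by reading off from the directional derivatives of $Q_n$ at $\hat u_n$ that $(C_n\hat u_n)_j\ge(C_n\hat u_n^\iota)_j$ always, with equality when $\hat u_{n,j}>0$, and then matching this against the definitions of $A^{\iota_j}_{C_n,j}$ and $B^{\iota_j}_{C_n,j}$.

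For the reverse inequality your route differs slightly from the paper's and is a little cleaner. The paper shows that $\betaL\in\O^\iota_{\Int}$ forces $\hat u_n=\hat u_n^\iota$ and then controls $P_{\beta^*}(\betaL\notin\O^\iota_{\Int})$ via Proposition~\ref{prop:lsdifflasso}, i.e.\ by bounding the Lasso--LS gap and using the Gaussian law of $\uLS$. You instead observe directly that $Q_n\ge Q_n^\iota$ on all of $\R^p$ (valid because $\beta_j\ge0$) with equality on $\{u:u_j+n^{1/2}\beta_j\ge0\ \forall j\}$, so that $\hat u_n^\iota$ lying in this half-space already forces $\hat u_n=\hat u_n^\iota$; the probability of this event tends to~$1$ along $\beta^{(k)}\to\infty$ since the law of $\hat u_n^\iota$ is fixed. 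This bypasses Proposition~\ref{prop:lsdifflasso} entirely, at the cost of nothing; the paper's detour through the LS estimator is not needed for this proposition.
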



In essence, Proposition~\ref{prop:infprobiota} states
Theorem~\ref{ther:infprob} for the orthant of the parameter space
where all components of $\beta$ are non-negative. The condition in
Proposition~\ref{prop:infprobiota} takes the role of
Condition~\ref{cond:A} for the corresponding orthant, as will become
apparent later on in the proof of Theorem~\ref{ther:infprob}.


\begin{proof}[Proof of Proposition~\ref{prop:infprobiota}]
We first show that $\inf_{\beta \in \O^\iota} P_\beta (\hat u_n \in
M_n) \geq P(\hat u_n^\iota \in M_n)$ by showing that for each fixed
$\omega\in \Omega$, $\hat u_n^\iota \in M_n$ implies that $\hat u_n
\in M_n$ as long as $\beta_j \geq 0$ for all $j$. For this, we first
show the following two facts.

\begin{enumerate}[ref=\emph{\alph*}]

\item \label{item:facta} $(C_n\hat u_n^\iota)_j \leq (C_n\hat u_n)_j$
for all $j = 1,\dots,p$.


Suppose there exists a $j_0$ with such that $(C_n\hat u_n^\iota)_{j_0}
> (C_n\hat u_n)_{j_0}$ and note that by (\ref{eq:und}) we have
$(C_n\hat u_n^\iota)_j = W_{n,j} - n^{-1/2}\lambda_{n,j}$ for each $j
= 1,\dots,p$. Now consider the directional derivative of $Q_n$ at its
minimizer $\hat u_n$ in direction $e_{j_0}$,
\begin{align*}
\pd{Q_n(\hat u_n)}{e_{j_0}} & \; = \; 2(C_n \hat u_n)_{j_0} -
2W_{n,j_0} + 2n^{-1/2} \lambda_{n,j_0} \left[
\ind_{\{\hat u_{n,j_0} \geq - n^{1/2}\beta_{j_0} \}} 
 - \ind_{\{\hat u_{n,j_0} < - n^{1/2}\beta_{j_0} \}}\right] \\
& \; \leq \; 2(C_n\hat u_n)_{j_0} - 2W_{n,j_0} + 2n^{-1/2}\lambda_{n,j_0} \\[1.5ex] 
& \; = \; 2(C_n\hat u_n)_{j_0} - 2 (C_n\hat u_n^\iota)_{j_0} < 0,
\end{align*}
which is a contradiction to $\hat u_n$ minimizing $Q_n$.

\item \label{item:factb} $\hat u_{n,j} > 0$ implies $(C_n\hat u_n)_j =
(C_n\hat u_n^\iota)_j$ for any $1 \leq j \leq p$.


If $\hat u_{n,j} > 0$ (and hence $\hat u_{n,j} + n^{1/2}\beta_j > 0$
when $\beta_j \geq 0$), then $Q_n$ is partially differentiable at
$\hat u_n$ with respect to the $j^\th$ component. Therefore, we have
\begin{align*}
\pd{Q_n(\hat u_n)}{u_j}	& \; = \; 
2(C_n\hat u_n)_j - 2W_{n,j} + 2n^{1/2}\lambda_{n,j} \\ 
& \; = \; 2(C_n\hat u_n)_j - 2(C_n\hat u_n^\iota)_j = 0.
\end{align*}

\end{enumerate}

Now, by Facts ({\ref{item:facta}}) and (\ref{item:factb}) we clearly
have that $\hat u_n \in A_{C_n}^{\iota_j}(\hat u_n^\iota) \cup
B_{C_n}^{\iota_j}(\hat u_n^\iota)$. So, by assumption, $\hat u_n^\iota
\in M_n$ clearly implies $\hat u_n \in M_n$ as long as $\beta_j \geq
0$ for all $j$. We have therefore shown that
$$
\inf_{\beta \in \O^\iota} P_\beta(\hat u_n \in M_n) \geq P(\hat u^\iota_n \in M_n).
$$

To see the reverse inequality, note that \emph{if} $\hat u_{n,j} +
n^{1/2}\beta_j > 0$ for all $j$, then $Q_n$ is differentiable at $\hat
u_n$ and
$$
\pd{Q_n(\hat u_n)}{u} = 2C_n\hat u_n - 2W_n + 2n^{-1/2}\lambda_n =
2C_n\hat u_n - 2C_n \hat u_n^\iota = 0,
$$
implying that $\hat u_n = \hat u_n^\iota$. Also note that $\hat
u_{n,j} + n^{1/2}\beta_j > 0$ for each $j$ is equivalent to $\betaL
\in \O^\iota_\Int$, so that
$$
\{\hat u_n \in M_n\} \subseteq \{\hat u_n^\iota \in M_n\} \cup
\{\betaL \notin \O^\iota_\Int\}.
$$
Now let $\kappa$ be a bound in the sup-norm on the set $\{z \in \R^p:
\|C_nz\|_\infty \leq n^{-1/2}\|\lambda_n\|_\infty\}$ and for an
arbitrary $\eps > 0$, pick $\beta^* \in \R^p$ such that $P(\uLS \leq
\kappa\iota - n^{1/2}\beta^*) \leq \eps$, where $\uLS =
n^{1/2}(\betaLS - \beta^*) \sim N(0,\sigma^2C_n^{-1})$. Note that by
Proposition~\ref{prop:lsdifflasso}, this implies that
$$
P_{\beta^*}(\betaL \leq 0) = 
P_{\beta^*}(\hat u_n - \uLS + \uLS \leq -n^{1/2}\beta^*) \leq
P_{\beta^*}(-\kappa\iota + \uLS \leq -n^{1/2}\beta^*) \leq \eps,
$$
yielding
$$
\inf_{\beta \in O^\iota} P_\beta(\hat u_n \in M_n) \leq
P_{\beta^*}(\hat u_n\in M_n) \leq P(\hat u^\iota_n \in M_n) + \eps.
$$
Since $\eps > 0$ was arbitrary, this shows the desired inequality.
\end{proof}


Essentially, we have now shown the main theorem for one part of the
parameter space $\R^p$. By flipping signs, we can apply
Proposition~\ref{prop:infprobiota} to each orthant $\O^d$, thus
obtaining the formula for the infimal coverage over the whole space.


\begin{proof}[Proof of Theorem~\ref{ther:infprob}]
First note that 
$$
\inf_{\beta \in \R^p} P_\beta (\hat u_n \in M_n) = \min_{d \in \{-1,1\}^p}
\inf_{\beta \in \O^d} P_\beta(\hat u_n \in M_n).
$$
Thus, if we can show that 
$$
\inf_{\beta \in \O^d} P_\beta(\hat u_n \in M_n) = P(\hat u_n^d \in
M_n)
$$
for each $d \in\{-1,1\}^p$, the proof is done. Now, fix $d$ and set $D
= \diag(d)$. We consider the function
\begin{align*}
\tilde Q_n(u) = Q_n(D u) & \; = \; u' DC_nDu - 2u'DW_n + 
2n^{-1/2}\sum_{j=1}^p\lambda_{n,j}\left[|d_j u_j + n^{1/2}\beta_j| -
|n^{1/2}\beta_j|\right] \\
& \; = \; u' \tilde C_n u - 2 u'\tilde W_n + 
2n^{-1/2}\sum_{j=1}^p\lambda_{n,j}
\left[|u_j + n^{1/2} d_j \beta_j| - |n^{1/2} d_j \beta_j|\right],
\end{align*}
where $\tilde C_n = D C_n D$, $\tilde W_n = DW_n \sim N(0,\sigma^2
\tilde C_n)$. We write $\tilde u_n$ for the minimizer of $\tilde Q_n$,
and, analogously to Section~\ref{sec:finitesample}, we define $\tilde
u_n^\iota$ to be the minimizer of the function $u'\tilde C_n u - 2
u'\tilde W_n + 2n^{-1/2}\sum_{j=1}^p\lambda_{n,j} u_j$.

If we can show that the set $DM_n$ satisfies the requirement of
Proposition~\ref{prop:infprobiota} with the matrix $\tilde C_n$ in
place of $C_n$, we may conclude that
$$
\inf_{\beta: d_j\beta_j \geq 0} P_\beta(\tilde u_n \in DM_n) = P(\tilde u_n^\iota
\in DM_n).
$$
Note that $\hat u_n = D\tilde u_n$, $\hat u_n^d = D\tilde
u_n^\iota$ and $D^{-1} = D$, so that
$$
\inf_{\beta \in \O^d} P(\hat u_n \in M_n) = 
\inf_{\beta \in \O^d} P(\tilde u_n \in DM_n) = 
P(\tilde u_n^\iota \in DM_n) = P(\hat u_n^d \in M_n),
$$
which proves the formula for the infimal coverage probability. We now
show that the set $D M_n$ satisfies that
$$
\bigcap_{j=1}^p A^\iota_{\tilde C_n,j}(Dm) \cup B^\iota_{\tilde C_n,j}(Dm) 
\subseteq DM_n
$$
for all $m \in M_n$. A straightforward calculation shows that this is
equivalent to
$$
\bigcap_{j=1}^p A^{d_j}_{C_n,j}(m) \cup B^{d_j}_{C_n,j}(m) \subseteq M_n
$$
for each $m \in M$ which clearly holds by Condition~\ref{cond:A} and
Proposition~\ref{lem:condA}.


The distributional result on $\hat u_n^d$ immediately follows by
(\ref{eq:und}).
\end{proof}

\subsection{Proofs for Section~\ref{sec:construct}} 

\begin{proof}[Proof of Proposition~\ref{prop:Cellipse}]
Let $m \in E_{C_n}(k)$ and $y \in A_{C_n}^d(m)$. We show that $y \in
E_{C_n}(k)$. Remember that $D = \diag(d)$ satisfies $DD=I_p$. Since $y
\in A_{C_n}^d(m)$ we have $-Dy \in \O^\iota$ and $-DC(m-y) \in \O^\iota$
implying that 
$$
y'C(m-y) = (Dy)'DC(m-y) \geq 0.
$$ 
Furthermore, since $(m-y)'C(m-y) \geq 0$, we have
$$
m'C(m-y) \geq y'C(m-y) \geq 0,
$$
which in turn yields
$$
m'Cm \geq m'Cy \geq y'Cy \geq 0.
$$
But this means that $k \geq m'Cm \geq m'Cy \geq y'Cy$ and therefore $y
\in E_{C_n}(k)$.
\end{proof}


\begin{proof}[Proof of Proposition~\ref{prop:dstar}]
We transform the ellipse to a sphere and the corresponding normal
distribution to have independent components with equal variances.
$$
P\left(\hat u^d_n \in E_{C_n}(k)\right) = 
P\left(C_n^{1/2}\hat u^d_n \in C_n^{1/2}E_{C_n}(k)\right),
$$
where $C_n^{1/2}\hat u^d_n \sim N(-n^{-1/2}C_n^{-1/2}\Lambda_n d,\sigma^2
I_p)$ and $C_n^{1/2}E_{C_n}(k) = \{z \in \R^p: \|z\|^2 \leq k\}$. So
clearly, the smallest probability will be achieved for the
distribution with mean furthest away from the origin, which is any
$d^*$ maximizing $\|C_n^{-1/2} \Lambda_n d\|$ over all $d \in
\{-1,1\}^p$.
\end{proof}


\begin{proof}[Proof of Proposition~\ref{prop:kstar}]
Similarly to the proof of Proposition~\ref{prop:dstar}, note that
$$
P\left(\hat u^d_n \in E_{C_n}(k)\right) = 
P\left(C_n^{1/2}\hat u^d_n/\sigma \in C_n^{1/2}E_{C_n}(k)/\sigma\right)
$$
with $\hat w = C_n^{1/2}\hat u^d_n/\sigma \sim
N(-n^{-1/2}C_n^{-1/2}\Lambda_n d/\sigma,I_p)$ and
$C_n^{1/2}E_{C_n}(k)/\sigma = \{z \in \R^p: \|z\|^2 \leq k/\sigma^2\}$.
Therefore, the probability in the above display is given by
$$
P(\|\hat w\|^2 \leq k/\sigma^2)
$$
where $\|\hat w\|^2$ clearly follows the claimed non-central
$\chi^2$-distribution.
\end{proof}


\begin{proof}[Proof of Proposition~\ref{prop:closeA}]
We start by showing that for any $m \in \R^p$, $d \in \{-1,1\}^p$, we have
\begin{equation}
\label{eq:closeA}
A^d_{\bar C}(y) \subseteq A^d_{\bar C}(m) \;\;\; \text{for all } 
y \in A^d_{\bar C}(m).
\end{equation}
Let $z \in A^d_{\bar C}(y)$. Then $d_j z_j \leq 0$ and $(\bar Cy)_j
\leq (\bar Cz)_j$ for all $j$. But since $y \in A^d_{\bar C}(m)$, we
also have $(\bar Cm)_j \leq (\bar Cy)_j$ for all $j$ so that that
$(\bar Cm)_j \leq (\bar Cz)_j$ for all $j$ and therefore $z \in
A^d_{\bar C}(m)$, thus proving (\ref{eq:closeA}). So clearly, the set
$$
\bigcup_{m \in M} \bigcup_{d \in \{-1,1\}^p} A^d_{\bar C}(m)
$$
satisfies Condition~\ref{cond:A}. For each $m \in M$, choose $d \in
\{-1,1\}^p$ in such a way that $d_j = 1$ if $m_j = 0$ and $d_j =
-\sgn(m_j)$ for $m_j \neq 0$. We then get $m \in A^d_{\bar C}(m)$,
implying that the set in the display above actually contains $M$.
\end{proof}
\subsection{Proofs for Section~\ref{sec:extensions}} \label{sec:proofsext}

\begin{proof}[Proof of Proposition~\ref{prop:Minterval}]
We start by proving that $M$ satisfies Condition~\ref{cond:A}. For this, we
need to show that for $d \in \{-1,1\}^2$, we have $A_{C_n}^d(m) =
A_{C_n}^d(m) \cap \O^{-d} \subseteq M^{-d}$ for all $m \in M$. We start by
doing so $d = (1,1)'$. Note that
$$
A^{(1,1)}_{C_n}(m) = \{z \in \O^{-(1,1)}: (Cm)_j \leq (Cz)_j, j=1,2\}
$$
and
$$
M^{-(1,1)} = \{m \in \O^{-(1,1)} : -(C\underbar{a})_1 \leq (Cm)_1\}
\subseteq \tilde M.
$$
If $m \in M^{-(1,1)}$, then clearly $-(C\underbar{a})_1 \leq (Cm)_1 \leq
(Cz)_1$ for any $z \in A^{(1,1)}_{C_n}(m)$, so that $z \in M^{-(1,1)}$
follows. If $m \in M^{(1,1)}$, then $A^{(1,1)}_{C_n}(m) = \emptyset$ unless
$m=0$, in which case $A^{(1,1)}_{C_n}(0) = \{0\}$. In either case,
$A^{(1,1)}_{C_n}(m) \subseteq M^{-(1,1)}$ follows immediately. If $m \in
M^{(-1,1)} \subseteq \{m \in \R^2: -a \leq m_1 \leq 0$, $m_2 \leq 0\}$, we
have $-(C\underbar a)_1 = -c_{11}a \leq c_{11}m_1 + c_{12}m_2 = (Cm)_1 \leq
(Cz)_1$ for any $z \in A^{(1,1)}_{C_n}(m)$, so that $z \in M^{-(1,1)}$
again follows. Finally, if $m \in M^{(1,-1)} \subseteq \{m \in \O^{(1,-1)}
: -c_{11}c_{22}a/c_{12} \leq (Cm)_2\}$, we have $-c_{11}a \leq
c^2_{12}/(c_{11}c_{22})\, c_{11}m_1 + c_{12}m_2 \leq (Cm)_1 \leq (Cz)_1$
for any $z \in A^{(1,1)}_{C_n}(m)$, so that $z \in M^{-(1,1)}$ follows yet
again.

The remaining cases $d = -(1,1)'$, $d = (-1,1)'$ and $d = (1,-1)'$ can be
shown in a similar manner.

To show the second part of Proposition~\ref{prop:Minterval}, assume there
exists $\bar m \in \bar M$ with $\bar m \notin M$ and show that this
implies $\max_{m \in \bar M}|m_1| > a$ if $\bar M$ complies with
Condition~\ref{cond:A}. If $\bar m \in \O^{(1,1)}$, then $\bar m \notin
M^{(1,1)}$ entails that $c_{11}\bar m_1 + c_{12}\bar m_2 = (C\bar m)_1 >
(C\underbar a)_1 = c_{11} a$. Let $\bar{\underbar{a}} = (\bar a,0)'$ where
$\bar a = \bar m_1 + c_{12}\bar m_2/c_{11} > a$ and note that
$\bar{\underbar{a}} \in A^{-(1,1)}_{C_n} \subseteq \bar M$ which implies
that $\max_{m \in \bar M}|m_1| \geq \bar a > a$ The remaining cases $\bar m
\in \O^{-(1,1)}$, $\bar m \in \O^{(-1,1)}$ and $\bar m \in \O^{(1,-1)}$ can
be shown in a similar manner.
\end{proof}

\subsection{Proofs for Section~\ref{sec:asymp}} \label{sec:proofsasymp}

\begin{proof}[Proof of Remark~\ref{rem:conservtuning}]

We show (\ref{eq:conservmsp}). Note that
Proposition~\ref{prop:lsdifflasso} entails that
$$
\betaL \in \betaLS - \frac{1}{n^{1/2}}B_n,
$$
where 
$$
B_n = \{z \in \R^p : |(C_nz)_j| \leq n^{-1/2}\lambda_{n,j} \text{ for }
j=1,\dots,p\}.
$$ 
Since $\lambda_n/n^{1/2}$ converges, we have $B_n \subseteq
C_n^{-1}\bar B_\delta$ with $\bar B_\delta = \{x \in \R^p :
\|x\|_\infty \leq \delta\}$ for some $\delta > 0$. Since $C_n^{-1} \to
C^{-1}$, the set $\{C_n^{-1} : n \in \N\}$ is bounded in operator
sup-norm by Banach-Steinhaus, so that the set $B_n$ is uniformly
bounded over $n$ in sup-norm by, say, $\gamma > 0$. We now fix a
component $j$ and show that $\liminf_{n \to \infty} \inf_{\beta \in
\R^p}P_\beta(\hat\beta_{\L,j} \neq 0) > 0$. To this end, define
$\mathcal{R}_j = \R^{j-1} \times \{0\} \times \R^{p-j}$. Let
$\xi^2_{j,n}$ and $\xi^2_j$ be the positive $j^\th$ diagonal element
of $C_n^{-1}$ and $C^{-1}$, respectively. Observe that
\begin{align*}
\inf_{\beta \in \R^p} P_\beta(\hat\beta_{\L,j} \neq 0) & \; \geq \;
\inf_{\beta \in \R^p} 
P_\beta\left((\betaLS - \frac{1}{n^{1/2}}B_n) \cap \mathcal{R}_j = \emptyset\right) \\ 
& \; \geq \; \inf_{\beta \in \R^p} P_\beta(n^{1/2}\hat\beta_{\LS,j} + \gamma < 0 
\text{ or } n^{1/2}\hat\beta_{\LS,j} - \gamma > 0) \\
& \; = 2 \Phi(-\gamma/\xi_{i,n}) \longto 2 \Phi(-\gamma/\xi_i) > 0
\end{align*}
\end{proof}


In order to prove Theorem~\ref{ther:infprobconserv}, we need an
asymptotic version of Proposition~\ref{prop:infprobiota} which is
formulated in the following.


\begin{proposition}\label{prop:infprobiotaasymp}
If $M \subseteq \R^p$ satisfies that 
$$
\bigcap_{j=1}^p A^{\iota_j}_{C,j}(m) \cup B^{\iota_j}_{C,j}(m) \subseteq M
$$
for all $m \in M_n$, then
$$
\inf_{t \in \bar\O^\iota}
P_t(\hat u \in M) = P(\hat u^{\iota} \in M).
$$
\end{proposition}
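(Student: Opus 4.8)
The plan is to mimic the finite-sample argument in the proof of Proposition~\ref{prop:infprobiota}, working in the conservatively-tuned asymptotic regime where $W_n \pto W \sim N(0,\sigma^2 C)$ and the reparametrized objective $Q_n$ is replaced by its moving-parameter limit $Q$ from Proposition~\ref{prop:Vconserv}. The two inequalities $\inf_{t\in\bar\O^\iota} P_t(\hat u \in M) \geq P(\hat u^\iota \in M)$ and $\leq$ are handled separately. For the ``$\geq$'' direction, I would argue pathwise in $W$: using the directional-derivative (first-order optimality) characterization of the minimizer $\hat u$ of $Q$, establish the asymptotic analogues of Facts (\ref{item:facta}) and (\ref{item:factb}), namely $(C\hat u^\iota)_j \leq (C\hat u)_j$ for all $j$ and $\hat u_j > 0 \Rightarrow (C\hat u)_j = (C\hat u^\iota)_j$. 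The subtlety here relative to the finite-sample case is that when some coordinates of $t$ are infinite, the penalty term $\ind_{\{|t_j|=\infty\}}\sgn(t_j)u_j$ contributes the sign directly rather than through $\ind_{\{u_j \geq -t_j\}}$; but for $t \in \bar\O^\iota$ one has $\sgn(t_j) = +1$ whenever $|t_j| = \infty$, and for finite $t_j \geq 0$ the indicator $\ind_{\{\hat u_j \geq -t_j\}}$ still equals $1$ whenever $\hat u_j > 0$, so exactly the same inequalities go through. These two facts put $\hat u \in \bigcap_j A^{\iota_j}_{C,j}(\hat u^\iota) \cup B^{\iota_j}_{C,j}(\hat u^\iota)$, whence the hypothesis on $M$ gives $\hat u^\iota \in M \Rightarrow \hat u \in M$; since $\hat u^\iota \sim N(C^{-1}\Lambda\iota, \sigma^2 C^{-1})$ does not depend on $t$, this yields the lower bound uniformly in $t \in \bar\O^\iota$.

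For the reverse inequality ``$\leq$'', I would follow the finite-sample template: if $\hat u_j + t_j > 0$ (interpreted as automatically true when $t_j = +\infty$) for every $j$, then $Q$ is differentiable at $\hat u$ with gradient $2C\hat u - 2W + 2\Lambda\iota$, forcing $\hat u = \hat u^\iota$. Hence $\{\hat u \in M\} \subseteq \{\hat u^\iota \in M\} \cup \{\hat u_j + t_j \leq 0 \text{ for some } j\}$. The idea is then to pick a sequence $t^{(n)} \in \bar\O^\iota$ (or rather parameters $\beta_n$ with $n^{1/2}\beta_n \to t$) with all components of $t$ tending to $+\infty$ fast enough that $P(\hat u^{(n)}_j + n^{1/2}\beta_{n,j} \leq 0 \text{ for some } j) \to 0$; this uses the uniform (over $n$) bound on $\hat u_n - \uLS$ from Proposition~\ref{prop:lsdifflasso} together with asymptotic normality of $\uLS$, exactly as in the $\eps$-argument at the end of the proof of Proposition~\ref{prop:infprobiota}. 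One must be slightly careful that the infimum on the left is over all $t \in \bar\O^\iota$, so it suffices to exhibit one such sequence realizing (in the limit) a coverage no larger than $P(\hat u^\iota \in M) + \eps$ for arbitrary $\eps$; combined with continuity/weak-convergence bookkeeping (Proposition~\ref{prop:Vconserv} gives $\hat u_n \dto \hat u$ along the chosen sequence, and one needs $M$ or its relevant boundary to be a continuity set, or one works with closed/open versions and uses the portmanteau inequalities) this closes the gap.

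The main obstacle I anticipate is the weak-convergence bookkeeping in the ``$\leq$'' direction: unlike the finite-sample proof where everything is an exact identity in a fixed Gaussian $W_n$, here one only has convergence in distribution $\hat u_n \dto \hat u$ along a moving-parameter sequence, so passing from $P_{\beta_n}(\hat u_n \in M_n)$ to $P(\hat u^\iota \in M)$ requires handling the possibility that $M$ is not a $\hat u^\iota$-continuity set and controlling the ``escape mass'' term uniformly. I expect this to be dispatched by the same trick as in Proposition~\ref{prop:infprobiota} — bound $P_{\beta^*}(\betaL \leq 0)$ by $\eps$ via Proposition~\ref{prop:lsdifflasso} and asymptotic normality of the LS estimator — but made asymptotic, together with an appeal to lower semicontinuity of $t \mapsto$ (limiting coverage) that follows from Proposition~\ref{prop:Vconserv} characterizing all accumulation points. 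The ``$\geq$'' direction, by contrast, is essentially a mechanical transcription of Facts (\ref{item:facta})–(\ref{item:factb}) to the limiting objective $Q$ and should present no real difficulty.
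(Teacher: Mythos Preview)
Your ``$\geq$'' direction is correct and matches the paper: transcribe Facts~(\ref{item:facta}) and (\ref{item:factb}) from the proof of Proposition~\ref{prop:infprobiota} to the limiting objective $Q$, replacing $n^{1/2}\beta$ by $t$ and $C_n$ by $C$, and note (as you do) that the case $t_j=+\infty$ only makes the penalty term cleaner.

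For the ``$\leq$'' direction, however, you are working much too hard and partly off target. The proposition concerns the \emph{limiting} objects $Q$ and $\hat u$ indexed by $t \in \bar\O^\iota \subseteq \Rquer^p$; there is no need to go back to finite-sample sequences $\beta_n$, invoke Proposition~\ref{prop:lsdifflasso}, or worry about weak-convergence bookkeeping and continuity sets. The paper's argument is a one-liner: the point $t^* = (\infty,\dots,\infty)$ lies in $\bar\O^\iota$, and at this $t^*$ the objective $Q$ from (\ref{eq:Vconserv}) reduces \emph{identically} to $Q^\iota$ (each penalty term becomes $\lambda_j\sgn(t_j)u_j = \lambda_j u_j$), so $\hat u = \hat u^\iota$ and
\[
\inf_{t\in\bar\O^\iota} P_t(\hat u\in M) \;\leq\; P_{t^*}(\hat u\in M) \;=\; P(\hat u^\iota\in M).
\]
You actually brushed against this when you wrote ``interpreted as automatically true when $t_j=+\infty$'', but then veered off into a sequence argument that re-imports the $\eps$-machinery from the finite-sample proof. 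That machinery was needed there precisely because $+\infty$ is not an admissible value of $n^{1/2}\beta$; in the asymptotic proposition it \emph{is} an admissible value of $t$, and the reverse inequality is immediate. Your proposed route, if made rigorous, would also need to confront the fact that Proposition~\ref{prop:lsdifflasso} is a finite-sample statement about $\hat u_n$, not about the limiting $\hat u$, so the detour is not just longer but would require additional justification you have not supplied.
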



\begin{proof} 
The first part of the proof is completely analogous to the first part
of the proof of Proposition~\ref{prop:infprobiota} after identifying
$n^{1/2}\beta$ with $t$ and dropping the subscript $n$. 
To see the reverse inequality, note that for $t^* =
(\infty,\dots,\infty) \in \Rquer^p$, we actually have $Q=Q^\iota$, so
that $\hat u = \hat u^\iota$ in this case which already yields that
$$
\inf_{t \in \bar O^\iota} P_t(\hat u \in M) \leq P_{t^*}(\hat u \in M) = 
P(\hat u^\iota \in M).
$$
\end{proof}


\begin{proof}[Proof of Theorem~\ref{ther:infprobconserv}]
The proof again is completely analogous to the proof of
Theorem~\ref{ther:infprob} after identifying $n^{1/2}\beta$ with $t$,
dropping the subscript $n$ everywhere and using
Proposition~\ref{prop:infprobiotaasymp} instead of
Proposition~\ref{prop:infprobiota}. Also, replace $\O^d$ by $\bar\O^d$
and note that 
\begin{align*}
Q^d(u) & \; = \; Q(Du) = u'DCDu - 2u'DW + 2\sum_{i=1}^p
\lambda_j \left[\ind_{\{t_j \in \R\}} (|t_j + d_ju_j| - |t_j|)
+ \ind_{\{|t_j| = \infty\}} \sgn(t_j) d_ju_j\right] \\
& \; = \; u'\tilde Cu - 2u'\tilde W + 2\sum_{i=1}^p \lambda_j 
\left[\ind_{\{d_jt_j \in \R\}} (|u_j + d_jt_j| - |d_jt_j|) + \ind_{\{|d_jt_j| =
\infty\}} \sgn(d_jt_j) u_j\right],
\end{align*}
where $\tilde C = DCD$ and $\tilde W = DW$.
\end{proof}


\begin{proof}[Proof of Corollary~\ref{cor:coverageconserv}] 
Let $c =  \liminf_{n \to \infty} \inf_{\beta \in \R^p}P_\beta(\beta
\in \betaL - n^{-1/2}M)$. Then there exists a sequence $\beta_n$ in
$\R^p$ such that $P_{\beta_n}(\beta_n \in \betaL - n^{-1/2}M) \to c$.
Assume that $n^{1/2}\beta_n \to t \in \Rquer^p$ (if the sequence does
not converge, pass to subsequences). Since
$$
P_{\beta_n}(\beta_n \in \betaL - n^{-1/2}M) =
P_{\beta_n}(n^{1/2}(\betaL - \beta_n) \in M) \longto c = P_t(\hat u
\in M)
$$
as $n \to \infty$ in the notation of Proposition~\ref{prop:Vconserv}.
Theorem~\ref{ther:infprobconserv} then yields $c \geq \min_{d \in
\{-1,1\}^p} P(\hat u^d \in M) = 1 - \alpha$. To see the reverse
inequality, let $\beta_n = d \in \{-1,1\}^p$ and note that for this
sequence, we have
$$
P_{\beta_n}(\beta_n \in \betaL - n^{-1/2}M) =
P_{\beta_n}(n^{1/2}(\betaL - \beta_n) \in M) \longto P_t(\hat u \in M)
$$
as $n \to \infty$, where $t = (d_1\infty,\dots,d_p\infty)' \in
\Rquer^p$. Note that for this choice of $t$, $P_t(\hat u \in M) =
P(\hat u^d \in M)$. Since $d \in \{-1,1\}^p$ was arbitrary, $c \leq
\min_{d \in \{-1,1\}^p} P(\hat u^d \in M) = 1 - \alpha$ follows.
\end{proof}


\begin{proof}[Proof of Proposition~\ref{prop:Vconsist}]
 
Define the function $V_n(u) = n[L_n(\beta_n + \lambda_n^*u/n)
- L_n(\beta_n)]/(\lambda_n^*)^2$ and note that $V_n$ is minimized at
$n(\betaL - \beta_n)/\lambda^*_n$. The function $V_n$ is then
given by
$$
V_n(u) = u'\frac{X'X}{n}u - 2\frac{1}{\lambda^*_n}u'X'\eps + 
2\sum_{j=1}^p \frac{\lambda_{n,j}}{\lambda_n^*} \left[
\Big\vert u_j + \frac{n}{\lambda^*_n}\beta_{n,j}\Big\vert 
- \Big\vert\frac{n}{\lambda^*_n}\beta_{n,j}\Big\vert\right].
$$
Clearly $u'X'Xu/n \to u'Cu$ by assumption. Since $X'\eps/\lambda_n^* =
(n^{1/2}/\lambda_n^*)X'\eps/n^{1/2}$ and $\lambda_n^*/n^{1/2} \to
\infty$ as well as $X'\eps/n^{1/2} = O_P(1)$, the second term in the
above display vanishes in probability. To treat the third term, simply
note that $\lambda_{n,j}/\lambda_n^* \to \lambda_{0,j} \in [0,1]$ and
$n\beta_{n,j}/\lambda_n^* \to \zeta_i \in \Rquer$ by assumption.
Piecing this together yields
$$
V_n(u) \pto u'Cu + 2\sum_{j=1}^p \lambda_{0,j}
\left[\ind_{\{\zeta_j \in \R\}} (|u_j + \zeta_j| - |\zeta_j|) + 
\ind_{\{|\zeta_j| = \infty\}} \sgn(\zeta_j)u_j\right] = V^\zeta(u).
$$
Since $V_n$ and $V^\zeta$ are strictly convex and $V^\zeta$ is non-random,
it follows by \cite{Geyer96TR} that also the corresponding minimizers
converge in probability to the minimizer of the limiting function.
\end{proof}


\begin{proof}[Proof of Proposition~\ref{prop:setscriptM}] 
The equality of the two sets given in the display of
Proposition~\ref{prop:setscriptM} is trivial. We show that the set
$\M$ as defined in (\ref{eq:M}) is equal to the set on the left-hand
side and start by proving that $\M$ is contained in that set. Take any
$m \in \M$, by definition, there exists a $\zeta \in \Rquer^p$ so that
$m$ is the minimizer of $V^\zeta$. We need to show that $|(Cm)_j| \leq
\lambda_{0,j}$ for all $j$. Assume that $|(Cm)_{j_0}| >
\lambda_{0,j_0}$ for some $1 \leq j_0 \leq p$. If $(Cm)_{j_0} >
\lambda_{0,j_0}$ we consider the directional derivative of $V^\zeta$
at its minimizer $m$ in the direction of $-e_{j_0}$ to get
\begin{align*}
\pd{V^\zeta(m)}{(-e_{j_0})} & \; = \; -2(Cm)_j + 2\lambda_{0,j_0} \left[ 
\ind_{\{m_j + \zeta_j \leq 0\}} - \ind_{\{m_j + \zeta_j > 0\}}\right] \\
& \; \leq \; -2(Cm)_j + 2\lambda_{0,j_0} < 0,
\end{align*}
which is a contradiction to $m$ minimizing $V^\zeta$. If $(Cm)_{j_0} <
-\lambda_{0,j_0}$, then consider the directional derivative of
$V^\zeta$ at $m$ in the direction of $e_{j_0}$ to arrive at
\begin{align*}
\pd{V^\zeta(m)}{e_{j_0}} & \; = \; 2(Cm)_j + 2\lambda_{0,j_0} \left[ 
\ind_{\{m_j + \zeta_j \geq 0\}} - \ind_{\{m_j + \zeta_j < 0\}}\right] \\
& \; \leq \; -2(Cm)_j + 2\lambda_{0,j_0} < 0,
\end{align*}
yielding a contradiction also.

To see the reverse set-inclusion, we need to show that for any $m \in
\R^p$ satisfying $|(Cm)_j| \leq \lambda_{0,j}$ for all
$j=1,\dots,p$, there exists a $\zeta \in \Rquer^p$ such that $m$ is
the minimizer of $V^\zeta$. Let $\zeta = -m \in \R^p$ and consider the
directional derivative of $V^\zeta$ at $m$ in any direction $r \in
\R^p\setminus\{0\}$:
$$
\pd{V^\zeta(m)}{r} = 2r'Cm + 2\sum_{j=1}^p \lambda_{0,j} |r_j| \geq 
\sum_{j=1}^p -2|(Cm)_j r_j| + 2\lambda_{0,j}|r_j| =  
2\sum_{j=1}^p \left[-|(Cm)_j| + \lambda_{0,j}\right]|r_j| \geq 0. 
$$
Since the directional derivative is non-negative in any direction $r
\in \R^p\setminus\{0\}$ and $V^\zeta$ is (strictly) convex, $m$ must
be the minimizer.
\end{proof}


\begin{proof}[Proof of Corollary~\ref{cor:coverageconsist}]
We start with the case $d>1$. Let $c = \liminf_{n \to \infty}
\inf_{\beta \in \R^p} P_{\beta}(\beta \in \betaL -
d\lambda_n^*\M/n)$. By definition, there exists a subsequence
$n_k$ and elements $\beta_{n_k} \in \R^p$ such that
$$
P_{\beta_{n_k}}\left(\beta_{n_k} \in \betaL - 
 d\frac{\lambda_{n_k}^*}{n_k}\M\right) = P_{\beta_{n_k}}
\left(\frac{n_k}{\lambda_{n_k}^*}(\betaL - \beta_{n_k}) \in d\M\right) \longto c
$$
as $k \to \infty$. Note that $d\M = \{m \in \R^p : |(Cm)_j| \leq
d\lambda_{0,j},1 \leq j \leq p\}$. Now, pick a further subsequence
$n_{k_l}$ such that $\lambda^*_{n_{k_l}}\beta_{n_{k_l}}/n_{k_l}$
converges in $\Rquer^p$ to, say, $\zeta$.
Proposition~\ref{prop:Vconsist} then shows that $n_{k_l}(\betaL -
\beta_{n_{k_l}})/\lambda^*_{n_{k_l}}$ converges in probability to the
unique minimizer of $V^\zeta$ as $l \to \infty$. Finally,
Proposition~\ref{prop:setscriptM} implies that $c=1$.


We next look the case where $d<1$. Let $m = C^{-1}\lambda_0$ so that
$m \in \M\setminus d\M$. From the proof of
Proposition~\ref{prop:setscriptM}, we know that for $\zeta = -m$ we
have $m = \argmin_{u \in \R^p} V^\zeta(u)$. Let $\beta_n =
n\zeta/\lambda^*_n$. By Proposition~\ref{prop:Vconsist}, $n(\betaL -
\beta_n)/\lambda^*_n$ converges to $m$ in $P_{\beta_n}$-probability,
so that $P_{\beta_n}(n(\betaL - \beta_n)/\lambda^*_n \in d\M) \to 0$.
\end{proof}

\end{appendix}

\section*{Acknowledgements} 

The authors gratefully acknowledge support from DFG grant FOR 916.

\bibliographystyle{ecrev}
\bibliography{journalsFULL,stat,economet}
  
\end{document}